\newcommand{\A}{\mathcal{A}}
\newcommand{\B}{\mathcal{B}}
\newcommand{\C}{\mathcal{C}}
\newcommand{\PC}{\mathbb{P}\mathcal{C}}
\newcommand{\G}{\mathcal{G}}
\newcommand{\h}{\mathbb{H}^2}
\newcommand{\M}{\mathcal{M}}
\newcommand{\N}{\mathbb{N}}
\newcommand{\p}{\mathcal{P}}
\newcommand{\R}{\mathbb{R}}
\newcommand{\T}{\mathcal{T}}
\newcommand{\Z}{\mathbb{Z}}
\newcommand{\ML}{\mathcal{ML}}
\DeclareMathOperator{\sys}{sys}
\DeclareMathOperator{\Col}{Col}
\newtheorem{theorem}{Theorem}[section]
\theoremstyle{plain}
\newtheorem{lem}[theorem]{Lemma}
\newtheorem{cor}[theorem]{Corollary}
\newtheorem{rem}[theorem]{Remark}
\newtheorem{conj}[theorem]{Conjecture}
\newtheorem{ques}[theorem]{Question}
\theoremstyle{definition}
\newtheorem*{ProjThm}{Theorem \ref{thm:No_projection}}
\newcounter{jenyacomments}
\title{An extension of the Thurston metric to projective filling currents}
\author{Jenya Sapir}
\begin{document}
\begin{abstract}
We study the geometry of the space of projectivized filling geodesic currents $\PC_{fill}(S)$.  Bonahon showed that Teichm\"uller space, $\T(S)$ embeds into $\PC_{fill}(S)$. We extend the symmetrized Thurston metric from $\T(S)$ to the entire (projectivized) space of filling currents, and we show that $\T(S)$ is isometrically embedded into the bigger space. Moreover, we show that there is no quasi-isometric projection back down to $\T(S)$. Lastly, we study the geometry of a length-minimizing projection from $\PC_{fill}(S)$ to $\T(S)$ defined previously by Hensel and the author.
\end{abstract}

\maketitle

\section{Introduction}
The Thurston metric was first proposed by Thurston for the Teichm\"uller space, $\T(S)$, of a closed, genus $g \geq 2$ surface in \cite{Thurston88}. The metric was defined in terms of optimal Lipschitz stretch factors between surfaces, but in the same paper, he shows that it can, in fact, be expressed in terms of lengths of curves where for any two $X, Y \in \T(S)$, we set
\[
 d(X,Y) = \sup_\alpha \log \frac{\ell_X(\alpha)}{\ell_Y(\alpha)}
\]
where the supremum is taken over all simple closed curves on $S$.

This is a complete, geodesic metric on $\T(S)$, but it is asymmetric. Various ways of symmetrizing the Thurston metric have been proposed, and in fact one of them, called the Length Spectrum Metric, was studied several decades prior by Sorvali in \cite{Sorvali72}. In this paper, we consider a slightly different symmetrization, and let
\[
 d_{Th}(X,Y) = d(X,Y) + d(Y,X)
\]
be the symmetrized Thurston metric on $\T(S)$.

By work of Bonahon \cite{Bon88}, $\T(S)$ embeds into the space of filling geodesic currents, $\C_{fill}(S)$. This is an infinite-dimensional space that contains embeddings of many other spaces of metrics on $S$, as well as all filling closed curves. The usual geometric intersection function $i(\cdot, \cdot)$ extends to a continuous, symmetric, bilinear intersection  on $\C_{fill}(S)$. With this, the embedding of $\T(S)$ into $\C_{fill}(S)$ is natural in the following way. If $X \in \C_{fill}(S)$ represents a point in $\T(S)$, and $\alpha$ represents any closed curve, then $i(X, \alpha)$ is the length of the geodesic representative of $\alpha$ in $X$. Moreover, there is a natural $\R^+$ action on $\C_{fill}(S)$. Quotienting by this action gives us the set of projectivized filling currents, $\PC_{fill}(S)$. 

It turns out that we can extend the symmetrized Thurston metric to all of $\PC_{fill}(S)$.
\begin{theorem}
\label{thm:ProjThm}
 For any $[\mu], [\nu] \in \PC_{fill}(S)$, choose representatives $\mu, \nu \in \C_{fill}(S)$. Then the function 
 \[
  d_{Th}([\mu], [\nu]) = \sup_{\lambda \in \C(S)} \log \frac{i(\nu, \lambda)}{i(\mu, \lambda)} + \sup_{\lambda \in \C(S)} \log \frac{i(\mu, \lambda)}{i(\nu, \lambda)}
 \]
 gives a complete, proper, mapping class group-invariant metric on $\PC_{fill}(S)$. Moreover, when restricted to $\T(S)$, it is exactly the symmetrized Thurston metric.
\end{theorem}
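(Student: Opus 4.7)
The plan is to verify the metric axioms first, then establish the topological properties (finiteness, properness, and hence completeness) using compactness of intersection-slices through filling currents, and finally compare with Thurston's original definition on $\T(S)$. For well-definedness on projective classes, rescaling $\mu\mapsto c\mu$ and $\nu\mapsto c'\nu$ shifts the two suprema by $\log(c'/c)$ and $\log(c/c')$ respectively, so the sum is unchanged. Mapping class group invariance is immediate from MCG-invariance of the intersection pairing, symmetry follows by swapping $\mu$ and $\nu$, and the triangle inequality follows by inserting $\nu$ via $\log(i(\rho,\lambda)/i(\mu,\lambda)) = \log(i(\rho,\lambda)/i(\nu,\lambda)) + \log(i(\nu,\lambda)/i(\mu,\lambda))$ together with subadditivity of $\sup$. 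For non-negativity, rewrite
\[
 d_{Th}([\mu],[\nu]) = \sup_{\lambda\in\C(S)} \log\frac{i(\nu,\lambda)}{i(\mu,\lambda)} - \inf_{\lambda\in\C(S)} \log\frac{i(\nu,\lambda)}{i(\mu,\lambda)} \geq 0,
\]
with equality exactly when the ratio $i(\nu,\lambda)/i(\mu,\lambda)$ is independent of $\lambda$; in that case I would invoke the rigidity result that a filling current is determined up to scale by its intersection numbers with simple closed curves to conclude $[\mu]=[\nu]$.

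Finiteness and properness share the same compactness input: for any filling current $\mu$, the slice $\{\lambda\in\C(S): i(\mu,\lambda)=1\}$ is compact (this is essentially the definition of filling). Continuity of $i(\nu,\cdot)$ on this compact slice yields a two-sided positive bound on $i(\nu,\lambda)/i(\mu,\lambda)$, so both suprema are finite. For properness, which I anticipate to be the main obstacle, given a sequence $[\mu_n]$ in a closed $R$-ball around $[\mu_0]$ I would choose representatives so that the two suprema in $d_{Th}([\mu_n],[\mu_0])$ are equal, hence each at most $R/2$. This forces the uniform two-sided bound $e^{-R/2} \leq i(\mu_n,\lambda)/i(\mu_0,\lambda) \leq e^{R/2}$ for all $\lambda$, which in particular bounds $i(\mu_n,\mu_0)$; by compactness of the $\mu_0$-unit slice a subsequence converges in $\C(S)$ to some $\mu_\infty$, and continuity of $i$ transfers the bound to $\mu_\infty$, making it filling. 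The delicate step is then to upgrade the pointwise convergence $i(\mu_n,\lambda)\to i(\mu_\infty,\lambda)$ to uniformity over a compact projective slice of test currents, so that $d_{Th}([\mu_n],[\mu_\infty])\to 0$; here I would exploit compactness of the $\mu_\infty$-unit slice together with uniform continuity of $i$ on compact sets to control how the sup behaves under limits. Completeness then follows from properness.

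Finally, to identify the restriction to $\T(S)$ with the symmetrized Thurston metric, recall that for $X,Y\in\T(S)$ Bonahon's embedding gives $i(X,\alpha)=\ell_X(\alpha)$ for every closed curve $\alpha$, and more generally $i(X,\lambda)$ extends this length function continuously to all $\lambda\in\C(S)$. Hence each of the two suprema defining $d_{Th}(X,Y)$ is a sup of length ratios $\ell_Y(\lambda)/\ell_X(\lambda)$ or its reciprocal over $\lambda\in\C(S)$. By density of weighted simple closed curves in $\C(S)$ and continuity of the length functions, this sup over all currents coincides with the sup over simple closed curves, recovering Thurston's $d(X,Y)+d(Y,X)$.
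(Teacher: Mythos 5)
Most of the structural pieces here are sound and parallel the paper: scale-invariance by the cancelling shifts $\log(c'/c)$ and $\log(c/c')$, the triangle inequality by splitting the ratio through $\nu$ and using subadditivity of $\sup$, nonnegativity by writing $d_{Th}$ as a $\sup$ minus an $\inf$, finiteness by compactness of a projective slice, and completeness/properness by showing that sequences approaching non-filling currents escape to infinity. Two steps, however, lean on false or misstated facts.

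The serious one is the final paragraph. Weighted \emph{simple} closed curves are not dense in $\C(S)$; the set $\ML(S)$ of measured laminations is a closed, nowhere-dense subspace of $\C(S)$, so a density argument cannot reduce $\sup_{\lambda\in\C(S)}$ to a supremum over simple closed curves. Indeed, the paper's Remark~\ref{rem:Supremum_over_all_currents} gives examples of distinct filling currents with identical simple marked length spectra, so on general currents these two suprema really are different. The reduction works on $\T(S)$ only because of Thurston's Lipschitz theorem: $\sup_{\lambda\in\ML(S)}\log\big(\ell_Y(\lambda)/\ell_X(\lambda)\big)$ equals the log of the optimal Lipschitz constant $L$ of a map $f\colon X\to Y$, and then any closed curve $\gamma$ satisfies $\ell_Y(\gamma)\le L\,\ell_X(\gamma)$, so the supremum over all closed curves (and hence over all currents, since weighted \emph{non-simple} closed curves are dense) is also $L$. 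You need this extra input; density alone is false.

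The same confusion appears, more mildly, in your non-degeneracy step, where you invoke ``a filling current is determined up to scale by its intersection numbers with simple closed curves.'' That rigidity statement is false (it is precisely what the Remark is warning about). What you actually get from a constant ratio over all of $\C(S)$ is that the intersection numbers with all closed curves agree up to scale, and then you should cite Otal's marked-length-spectrum rigidity for currents, which requires the full (non-simple) spectrum. The rest of your argument, including the properness sketch via a balanced choice of representatives and compactness of the $\mu_0$-slice, is reasonable modulo the acknowledged uniformity step, which in the paper is handled by asserting continuity of $d_{Th}$ on $\PC_{fill}(S)\times\PC_{fill}(S)$.
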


In the process of the proof, we will note several choices we made that are necessary to make sure the metric is well-defined and non-degenerate. First, the two summands of $d_{Th}$ are not actually functions on $\PC_{fill}(S)$. However, it turns out that the sum is, indeed, well-defined (Lemma \ref{lem:Function_on_Projective_Currents}). Next, it is necessary to take the supremum over all currents, not just over measured laminations as is done in the Thurston metric, or else the metric would be degenerate (Remark \ref{rem:Supremum_over_all_currents}). Lastly, we can scale each summand as in Equations \ref{eq:S} and \ref{eq:d_S} to make each one a function on $\PC_{fill}(S)$. Even then, however, each summand individually is only a pseudo-metric (as shown in Lemma \ref{lem:Asymetric_Degenerate}). It is an interesting question whether a different scaling would lead to a non-degenerate asymmetric metric:
\begin{ques}
 Is it possible to define a metric on $\PC_{fill}(S)$ that extends the asymmetric Thurston metric on $\T(S)$?
\end{ques}

\subsection{Connection to hyperbolic, left invariant pseudo-metrics}
By \cite{Eduardo,EC}, $\PC_{fill}(S)$ actually isometrically embeds in a much larger space, denoted $D(G)$. If $G$ is a surface group, then $D(G)$ is (a quotient of)   the space of hyperbolic, left invariant pseudo-metrics for $G$ that are quasi-isometric to a word metric. In particular, in \cite{Eduardo}, Oregon-Reyes defines the so-called (symmetrized) Dilatation metric $\Delta$ on $D(G)$. In a subsequent paper, Oregon-Reyes and Cantrell \cite{EC} prove that $\PC_{fill}(S)$ embeds into $D(G)$. Then it follows directly from the definitions of $d_{Th}$ on $\PC_{fill}(S)$ and $\Delta$ on $D(G)$ that this must be an isometric embedding. However, this cannot be an isometry. In fact, $d_{Th}$ is a proper metric. However, in upcoming work of Oregon-Reyes and Cantrell, they show that $\Delta$ is not.

\subsection{Geometry of $\PC_{fill}(S)$ with the symmetrized Thurston metric}
Since $d_{Th}$ restricts to the usual symmetrized Thurston metric on Teichm\"uller space, we see that $\T(S)$ is isometrically embedded in $\PC_{fill}(S)$. We want to show that $\PC_{fill}(S)$ is much bigger than $\T(S)$. In proofs of Theorem \ref{thm:No_projection} and Lemma \ref{lem:Define_mu} we construct two different families of currents that are far from Teichm\"uller space.

We say that a map $\phi: \PC_{fill}(S) \to \T(S)$ is a quasi-isometric projection if $\phi$ is a quasi-isometry that restricts to a quasi-isometry on $\T(S)$ as a subspace of $\PC_{fill}(S)$. We show that no such map exists:
\begin{theorem}
\label{thm:No_projection}
 There is no quasi-isometric projection from $\PC_{fill}(S)$ to $\T(S)$ with respect to the symmetrized Thurston metric. 
\end{theorem}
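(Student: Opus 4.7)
\textbf{Step 1: reduce to coarse density of $\T(S)$.} Suppose for contradiction that $\phi : \PC_{fill}(S) \to \T(S)$ is a $(K,C)$-quasi-isometric projection. Since $\phi|_{\T(S)}$ is a self-quasi-isometry of Teichm\"uller space, it admits a quasi-inverse $\psi : \T(S) \to \T(S)$. Replacing $\phi$ by $\psi \circ \phi$ (which is again a quasi-isometric projection, with some new constants I continue to call $(K,C)$) I may assume $d_{Th}(\phi(X), X) \leq D$ for every $X \in \T(S)$, where $D$ is a constant depending only on the original data. Now for any $\mu \in \PC_{fill}(S)$, set $X := \phi(\mu) \in \T(S)$. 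The lower quasi-isometry bound applied to the pair $(\mu, X)$ gives
\[
 d_{Th}(\mu, X) \leq K\, d_{Th}(\phi(\mu), \phi(X)) + KC = K\, d_{Th}(X, \phi(X)) + KC \leq K(D + C),
\]
using $\phi(\mu) = X$. So every point of $\PC_{fill}(S)$ is within uniform $d_{Th}$-distance $K(D+C)$ of $\T(S)$; in other words, $\T(S)$ is coarsely dense in $\PC_{fill}(S)$.

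\textbf{Step 2: construct currents far from $\T(S)$.} To contradict Step 1, I would exhibit a sequence of projective filling currents $[\mu_n]$ with $d_{Th}([\mu_n], \T(S)) \to \infty$. Using the formula in Theorem~\ref{thm:ProjThm}, this amounts to producing, for each target $R > 0$, a filling current $\mu$ with the property that for every $X \in \T(S)$ some test curve $\lambda \in \C(S)$ witnesses either $i(\mu, \lambda)/i(X, \lambda) > e^R$ or $i(X, \lambda)/i(\mu, \lambda) > e^R$ after projective normalization. The paper's introduction announces that precisely such a family is constructed in this proof.

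\textbf{Main obstacle.} The geometric content concentrates in Step 2: one must build currents whose intersection function $\lambda \mapsto i(\mu, \lambda)$ is incompatible with every hyperbolic length spectrum in a quantitatively growing, uniform fashion. The subtlety is that $X$ ranges over all of $\T(S)$ — including its thin parts, where individual length ratios $\ell_X(\alpha)/\ell_X(\beta)$ can blow up for reasons unrelated to any defect of $\mu$ — so a single witness curve $\lambda$ must be produced uniformly in $X$. This forces an appeal to some rigidity property enjoyed by Liouville currents but violated by $\mu_n$ at scale $R$, such as the collar lemma or Bonahon's intersection-theoretic characterization of hyperbolic metrics among geodesic currents. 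The first candidates I would try are weighted filling multi-curves of the form $\mu_n = \sum_i c_{i,n} \alpha_i$, with weights and curves tuned so that the ratio pattern among the $i(\mu_n, \alpha_i)$ cannot be reproduced by any hyperbolic length function, and with $i(\mu_n, \mu_n)$ controlled so that the $[\mu_n]$ remain in $\PC_{fill}(S)$ rather than escaping to the non-filling boundary.
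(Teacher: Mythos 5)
Your Step~1 is correct and parallels the paper's final paragraph: the paper likewise observes that a quasi-isometric projection $\phi$ forces every $\mu_n$ to lie within uniformly bounded $d_{Th}$-distance of some $X_n\in\T(S)$ (the paper skips the quasi-inverse composition and just uses coarse surjectivity of $\phi|_{\T(S)}$ plus the upper quasi-isometry inequality, which is slightly more direct, but both routes are fine). So the reduction to ``produce projective filling currents at arbitrarily large $d_{Th}$-distance from $\T(S)$'' is sound.

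The genuine gap is Step~2, which is where essentially all of the content lives and where you stop at a heuristic. You are right that weighted filling multicurves are the correct class of examples, but the paper's construction hinges on a specific structural idea you do not identify: decompose $S$ into two subsurfaces $Y_1,Y_2$ of genus at least one, take a marking $\M$ whose curves split into $\M_1$ (those meeting the interior of $Y_1$) and $\M_2=\M\setminus\M_1$, and set $\mu_n=\sum_{\gamma\in\M_1}\gamma+n\sum_{\gamma\in\M_2}\gamma$. The point is that, \emph{for every} $X\in\T(S)$, a Bers pants decomposition of $X$ must contain a short curve $\beta_2$ crossing $Y_2$ (so $i(\mu_n,\beta_2)\geq n$ while $\ell_X(\beta_2)$ is bounded above), and a short curve $\beta_1$ crossing $Y_1$, which intersects some $\gamma_1\in\M_1$ lying entirely in $Y_1$ (so $i(\mu_n,\gamma_1)$ is bounded above while $\ell_X(\gamma_1)$ is bounded below). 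These two witness curves $\beta_2,\gamma_1$ depend on $X$ — so your worry that ``a single witness curve $\lambda$ must be produced uniformly in $X$'' is a misdiagnosis; uniformity in $X$ is supplied not by a single test curve but by the universality of the Bers constant. Your vague appeal to ``rigidity enjoyed by Liouville currents'' or to a ratio pattern unattainable by hyperbolic length functions is not enough: you need the explicit subsurface-partition trick to make one ratio blow up in each direction of the supremum defining $d_{Th}$, for all $X$ simultaneously. As written, Step~2 is a plan, not a proof, and it omits the mechanism that actually closes the argument.
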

To prove this theorem, we will show that there are projective filling currents that are arbitrarily far from any point in Teichm\"uller space. We give a simple example of such a family of curves to prove this theorem, and give a much larger family with more properties in Theorem \ref{thm:Diameter_unbounded_precise}.

This result would imply that $\PC_{fill}(S)$ is not quasi-isometric to $\T(S)$, as long as any quasi-isometric embedding of $\T(S)$ into itself is, in fact, a quasi-isometry. This is not yet known for the symmetrized Thurston metric. Still, we make the following conjecture:
\begin{conj}
 $\PC_{fill}(S)$ is not quasi-isometric to Teichm\"uller space with the symmetrized Thurston metric.
\end{conj}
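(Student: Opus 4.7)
The plan is proof by contradiction: I will show that a quasi-isometric projection $\phi : \PC_{fill}(S) \to \T(S)$ would force $\T(S)$ to be coarsely dense in $\PC_{fill}(S)$, and then exhibit projective filling currents arbitrarily far from all of $\T(S)$ in the $d_{Th}$ metric.

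\textbf{Coboundedness reduction.} Suppose $\phi$ is a quasi-isometric projection with quasi-isometry constants $(K,A)$, and let $R$ be a coboundedness constant for the image of $\phi|_{\T(S)}$ in $\T(S)$. For any $\mu \in \PC_{fill}(S)$, choose $X \in \T(S)$ with $d_{Th}(\phi(X), \phi(\mu)) \leq R$; applying the quasi-isometry inequality for $\phi$ then yields $d_{Th}(\mu, X) \leq K(R + A)$. So $\T(S)$ is coarsely dense in $\PC_{fill}(S)$, and it suffices to build $\{\mu_n\} \subset \PC_{fill}(S)$ with $d_{Th}(\mu_n, \T(S)) \to \infty$.

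\textbf{Construction.} I would take simple closed curves $c_1, c_2, c_3$ on $S$ with $c_1 \cup c_2$ not filling, $c_1 \cup c_2 \cup c_3$ filling, and $i(c_1, c_2) \geq 1$, and set
\[
\mu_n \;=\; n(c_1 + c_2) + c_3 \in \C_{fill}(S),
\]
so that projectively $[\mu_n] \to [c_1 + c_2] \notin \PC_{fill}(S)$. To lower-bound $d_{Th}(\mu_n, X)$, I would use the product-ratio form of $d_{Th}$ and test against pairs $(\lambda_1, \lambda_2)$ with $\lambda_1$ a short curve in $X$ intersecting $c_1 \cup c_2$ (so $i(\mu_n, \lambda_1) = \Omega(n)$) and $\lambda_2$ a simple closed curve disjoint from $c_1 \cup c_2$ (so $i(\mu_n, \lambda_2) = i(c_3, \lambda_2) = O(1)$). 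This yields an estimate of the shape
\[
d_{Th}(\mu_n, X) \;\geq\; \log n \;+\; \log \frac{\ell_X(\lambda_2)}{\ell_X(\lambda_1)} \;-\; O(1),
\]
with $\lambda_1, \lambda_2$ chosen adaptively in $X$.

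\textbf{Main obstacle.} The hard step is to show that the geometric length ratio $\ell_X(\lambda_2)/\ell_X(\lambda_1)$ can be bounded from below uniformly over all of $\T(S)$. The worry is that a carefully chosen hyperbolic structure $X$, for instance one of the form $T_{c_1}^a T_{c_2}^b X_0$, might approximate $\mu_n$ too well. The essential input is the hypothesis $i(c_1, c_2) \geq 1$: because $T_{c_1}$ and $T_{c_2}$ do not commute, any iterated twist necessarily produces a length function with quadratic $ab$-growth along some simple closed curve, which cannot match the linear-in-$n$ weight appearing in $i(\mu_n, \cdot)$. Turning this heuristic into a genuine uniform estimate --- handling the thin part of $\T(S)$ via the collar lemma and the thick part via Fenchel--Nielsen coordinate comparisons --- constitutes the real technical content of the proof.
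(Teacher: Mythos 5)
Your proposal does not actually address the conjecture; it addresses the (different, weaker) statement that is the paper's Theorem \ref{thm:No_projection}. The conjecture asserts that \emph{no quasi-isometry} $\phi:\PC_{fill}(S)\to\T(S)$ exists. Your argument assumes a \emph{quasi-isometric projection}, i.e.\ a quasi-isometry whose restriction to $\T(S)\subset\PC_{fill}(S)$ is itself a quasi-isometry. That extra restriction is precisely what you use in the ``coboundedness reduction'': you pick $X\in\T(S)$ with $d_{Th}(\phi(X),\phi(\mu))\le R$, and the existence of such an $X$ requires that $\phi(\T(S))$ be coarsely dense in $\T(S)$. A general quasi-isometry $\phi:\PC_{fill}(S)\to\T(S)$ only restricts to a quasi-isometric \emph{embedding} of $\T(S)$ into itself, and it is an open question (as the paper itself notes just before stating the conjecture) whether a quasi-isometric self-embedding of $(\T(S),d_{Th})$ is automatically cobounded. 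So your reduction begs the very question that makes this a conjecture rather than a corollary of Theorem \ref{thm:No_projection}; the conjecture is not proved in the paper at all.

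Regarding the rest: once you restrict to Theorem \ref{thm:No_projection}, the structure of your argument (coarse density of $\T(S)$ would follow, then exhibit currents far from all of $\T(S)$) matches the paper's. Your specific family $\mu_n=n(c_1+c_2)+c_3$ is a plausible and more economical alternative to the paper's marking-based construction, and the ``Main obstacle'' you flag is real. The paper handles the analogous difficulty not by the twist-growth heuristic you sketch, but by a case split: either a designated short curve of $\mu_n$ is already short in $X$ (then the collar lemma forces a Bers curve of $X$ to hit $\mu_n$ many times), or it is long in $X$ (then its length-to-intersection ratio is directly large); combined with a curve of self-intersection number one in the complement of $c_1\cup c_2$, whose $X$-length is bounded below by Buser's theorem, this closes the estimate. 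That route avoids needing any uniform control of $\ell_X(\lambda_2)/\ell_X(\lambda_1)$ and sidesteps Fenchel--Nielsen computations entirely; you may also want to check that the complement of $c_1\cup c_2$ actually contains a non-simple curve to play the role of $\lambda_2$, which is not automatic for an arbitrary choice of the $c_i$.
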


In \cite{HS21}, Hensel and the author showed that there is a length-minimizing projection 
\[
\pi : \PC_{fill}(S) \to \T(S)
\]
that fixes Teichm\"uller space and is continuous, proper, and mapping class group invariant. In particular, for each $X \in \T(S)$, the set $\pi^{-1}(X)$ is compact and so its diameter with respect to $d_{Th}$ is bounded. However, we will show that this diameter goes to infinity as $X$ goes to the boundary. 
\begin{theorem}
\label{thm:Diameter_unbounded}
 For every $D > 0$ there is an  $X \in \T(S)$ for which the diameter of $\pi^{-1}(X)$ is at least $D$. 
\end{theorem}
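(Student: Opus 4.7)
The strategy is to reduce the statement to the existence of projective filling currents that are arbitrarily far from $\T(S)$ in $d_{Th}$, which is precisely the content of the family constructed in Lemma \ref{lem:Define_mu}. The key observation is that since the length-minimizing projection $\pi$ restricts to the identity on Teichm\"uller space, the point $X$ always lies in its own fiber $\pi^{-1}(X)$. Consequently, exhibiting even a \emph{single} current $\mu \in \pi^{-1}(X)$ that is far from $X$ in $d_{Th}$ already forces a lower bound on the diameter of the fiber, so I do not need to produce pairs of non-Teichm\"uller currents in the same fiber.

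Concretely, I would proceed as follows. Let $\{\mu_n\} \subset \PC_{fill}(S)$ denote the family of Lemma \ref{lem:Define_mu}, chosen so that
\[
 \inf_{Y \in \T(S)} d_{Th}(\mu_n, Y) \ \longrightarrow \ \infty
\]
as $n \to \infty$. Set $X_n := \pi(\mu_n) \in \T(S)$. Then $\mu_n \in \pi^{-1}(X_n)$ by the definition of $X_n$, while $X_n \in \pi^{-1}(X_n)$ because $\pi|_{\T(S)} = \mathrm{id}$. Hence
\[
 \operatorname{diam}\bigl(\pi^{-1}(X_n)\bigr) \ \geq \ d_{Th}(X_n, \mu_n) \ \geq \ \inf_{Y \in \T(S)} d_{Th}(\mu_n, Y),
\]
and the right-hand side tends to infinity. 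Given any $D > 0$, choose $n$ large enough that this infimum exceeds $D$; the point $X := X_n$ then witnesses the theorem. The compactness of $\pi^{-1}(X)$ guaranteed by the properness of $\pi$ is compatible with this, since the argument produces \emph{different} base points $X_n$ rather than a single fiber of infinite diameter.

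\textbf{Main obstacle.} Under this approach, essentially all of the work is packaged into Lemma \ref{lem:Define_mu}: one must construct explicit filling currents $\mu_n$ together with test curves $\lambda_n \in \C(S)$ realizing a large ratio $i(\mu_n,\lambda_n)/i(Y,\lambda_n)$, or its reciprocal, for \emph{every} $Y \in \T(S)$, uniformly in $Y$. A natural candidate is a weighted combination of a simple closed curve $\alpha_n$ of very high multiplicity with a fixed filling background, chosen so that no hyperbolic metric can simultaneously keep $\alpha_n$ short and the background efficient: pinching $\alpha_n$ blows up the length of curves crossing it, while not pinching forces $\alpha_n$ itself to contribute a large summand to the symmetrized Thurston distance. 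Once such a uniformly-far family is in hand, the fiber argument above is formal, so the real technical challenge lies entirely in Lemma \ref{lem:Define_mu} and not in the reduction used for Theorem \ref{thm:Diameter_unbounded}.
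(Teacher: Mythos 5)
Your reduction is correct and is essentially the one the paper uses: since $\pi$ fixes $\T(S)$, both $\mu$ and $X=\pi(\mu)$ lie in $\pi^{-1}(X)$, so the diameter of that fiber is at least $d_{Th}(\mu,X)\geq \inf_{Y\in\T(S)} d_{Th}(\mu,Y)$, and the theorem follows once one produces currents uniformly far from all of $\T(S)$. The only small imprecision is attributing the ``far from $\T(S)$'' claim to Lemma~\ref{lem:Define_mu} alone; that lemma only builds a current with controlled intersection numbers, and the actual distance lower bound (using Theorems~\ref{thm:Short_curves} and~\ref{thm:Thick_part_guarantee}) is carried out in Theorem~\ref{thm:Diameter_unbounded_precise}, which the paper then uses exactly as you do.
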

In fact, we will prove more than this: in Theorem \ref{thm:Diameter_unbounded_precise}, we will show that over any $\epsilon$-thin part of $\T(S)$ (with short curves specified), we can find a current $\mu$ arbitrarily far from all points in $\T(S)$.

Of course, if $\mu$ is arbitrarily far from all points in $\T(S)$, it is also far from its projection $\pi(\mu)$, and so the diameter of its fiber is large. It would be interesting to know if the diameter of this fiber (coarsely) realizes the distance of $\mu$ to $\T(S)$. In particular, we would like to know about the relationship between the length-minimizing projection $\pi$, and the \textit{distance-minimizing} projection for $d_{Th}$. Since $d_{Th}$ is not likely to be uniquely geodesic, we pose this question as follows:
\begin{ques}
 Is the length-minimizing projection $\pi$ close to a nearest-point projection for $d_{Th}$?
\end{ques}

\subsection{Organization of the paper}
Section \ref{sec:Background} provides some background on geodesic currents, and defines some notation used in later sections. We prove Theorem \ref{thm:ProjThm} about the existence and basic properties of the symmetrized Thurston metric in Sections \ref{sec:Well-defined} through \ref{sec:Complete_metric}. In Section \ref{sec:No_projection} we prove Theorem \ref{thm:No_projection}, which shows there is no quasi-isometric projection from filling projective currents onto $\T(S)$. Then we prove Theorem \ref{thm:Diameter_unbounded} about a large family of currents that are arbitrarily far from all points in $\T(S)$, in Sections \ref{sec:When_curves_short} through \ref{sec:Unbounded_diameter}.
 \subsection{Acknowledgement}
 We would like to thank Vincent Delecroix for the discussions at Oberwolfach that inspired us to write this paper. We would also like to thank Didac Martinez-Granado and Eduardo Oregon-Reyes for telling us about the very interesting connection the space of pseudo-metrics $D(G)$.

\section{Background}
\label{sec:Background}
The space of geodesic currents is defined as follows. Fix a hyperbolic metric $X$ on a genus $g$ surface $S$ with negative Euler characteristic. Then we can identify the universal cover of $S$ with the hyperbolic plane $\h$. The space $\G$ of all unoriented, complete geodesics in $\h$ is then in bijective correspondence with pairs of distinct pairs of points $(x,y)$ on the boundary at infinity, up to the identity $(x,y) \sim (y,x)$. In other words, $\G \simeq \partial \h \times \partial \h \setminus \Delta/ \sim$, where $\Delta$ is the diagonal, and $\sim$ is the $\Z_2$ action. Then $\pi_1(S)$ acts on $\h$, and this action extends to an action on $\G$. The space $\C(S)$ of geodesic currents is then defined as the space of all $\pi_1(S)$-invariant Borel measures on $\G$. We then endow $\C(S)$ with the weak$^*$ topology. Note that a priori, $\C(S)$ depends on the choice of metric $X$. However, Bonahon showed that the spaces obtained using any two metrics $X$ and $Y$ are Holder equivalent.

There is a natural action of $\R^+$ on $\C(S)$, given by scaling each measure by a positive constant. Quotienting by this action gives the space of projective geodesic currents, $\PC(S)$. This space is compact, and contains embedded copies of the set of all closed geodesics on $S$, and Teichmuller space \cite{Bon88}.

The space $\C(S)$ is endowed with a continuous, symmetric, bilinear intersection form $i(\cdot, \cdot)$. It has the property that if $\mu, \nu$ represent closed geodesics, then $i(\mu, \nu)$ gives their geometric intersection number. However, if $\mu$ represents a closed curve, and $\nu$ represents a metric, then $i(\mu, \nu)$ is the length of the geodesic representative of $\mu$ with respect to $\nu$.

A geodesic current $\mu$ is \textbf{filling} if and only if it has positive intersection with all currents. Equivalently, any complete geodesic $\gamma \in \G$ intersects some geodesic in the support of $\mu$. In the case where $\mu$ represents a (non-simple) closed geodesic, this is equivalent to the usual definition, where $\mu$ cuts $S$ into simply connected regions. Given a geodesic current $\mu$, we can define its \textbf{systolic length} by 
\[
 \sys(\mu) = \inf_{\alpha} i(\mu, \alpha)
\]
where the infimum is taken over all closed curves. In \cite[Corollary 3.26]{BIPP}, they show that we can, in fact, take the infimum over simple closed curves. This was shown in \cite{BIPP} to be equivalent to taking the infimum over all measured laminations, or, in fact, over all geodesic currents. In \cite{BIPP}, they show that $\sys(\mu) = 0$ if and only if $\mu$ is non-filling. Since the condition $\sys(\mu) = 0$ is invariant under scaling $\mu$, we see that we can define filling and non-filling projective currents, as well. We let $\C_{fill}(S)$ and $\PC_{fill}(S)$ denote the spaces of filling currents and filling projective currents, respectively.

We note the following fact, that is found in \cite[Theorem 4.1]{BIPP} in the case of non-filling currents, and is straightforward for filling currents:
\begin{lem}
The systolic length of each current is realized by a measured lamination. That is, for each $\mu \in \C(S)$, there is a $\lambda \in \ML(S)$ so that $i(\mu, \lambda) = \sys(\mu)$.
\end{lem}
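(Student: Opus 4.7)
The plan is to handle the two cases separately. For non-filling currents $\mu$, one has $\sys(\mu) = 0$, and the existence of $\lambda \in \ML(S)$ with $i(\mu, \lambda) = 0$ is exactly \cite[Theorem 4.1]{BIPP}. So I focus on the filling case, where $\sys(\mu) > 0$, and prove the result by a compactness argument.

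By the previously-cited result of \cite{BIPP}, the infimum defining $\sys(\mu)$ can be computed over simple closed curves, so I would pick a minimizing sequence $\alpha_n$ of simple closed curves (regarded as elements of $\ML(S)$) with $i(\mu, \alpha_n) \to \sys(\mu)$. The goal is to extract a convergent subsequence in $\ML(S)$ whose limit realizes the systole.

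The key step is to show that $\{\alpha_n\}$ lies in a compact subset of $\ML(S)$. Fix an auxiliary hyperbolic metric $X$ on $S$. Since $\mu$ is filling, the function $\lambda \mapsto i(\mu, \lambda)$ is continuous and strictly positive on the compact space $\PML(S)$, so a standard compactness argument on a length-$1$ section produces a constant $c > 0$ with $i(\mu, \lambda) \geq c \cdot \ell_X(\lambda)$ for all $\lambda \in \ML(S)$. Applied to the $\alpha_n$, this bounds $\ell_X(\alpha_n) \leq i(\mu, \alpha_n)/c$; since bounded-length balls in $\ML(S)$ are compact, I extract a convergent subsequence $\alpha_n \to \lambda_\infty$. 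The limit lies in $\ML(S)$ because $\ML(S)$ is cut out in $\C(S)$ by the closed condition $i(\lambda, \lambda) = 0$. By continuity of intersection, $i(\mu, \lambda_\infty) = \lim_n i(\mu, \alpha_n) = \sys(\mu) > 0$, so $\lambda_\infty \neq 0$ and realizes the systolic length.

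The main obstacle is the properness bound $i(\mu, \lambda) \geq c \cdot \ell_X(\lambda)$ for filling $\mu$; this is essentially a repackaging of the filling hypothesis together with the compactness of $\PML(S)$, and is where the actual content of the argument sits. Everything else is formal extraction of a limit plus continuity of the intersection form, which is why the author describes this case as ``straightforward.''
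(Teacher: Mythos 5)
Your argument matches the paper's: both handle the non-filling case by citing \cite[Theorem 4.1]{BIPP}, and for filling $\mu$ both derive the key bound $i(\mu,\lambda) \geq c\,\ell_X(\lambda)$ from continuity of $i(\mu,\cdot)/\ell_X(\cdot)$ on a compact projective space, then use it to confine a minimizing family to a bounded-length region. The only difference is cosmetic: the paper notes that the infimum over simple closed curves reduces to the finite set of curves with $\ell_X \leq 2\sys(\mu)/c$, whereas you extract a convergent subsequence in $\ML(S)$ using compactness of bounded-length sets and closedness of $\ML(S)$ in $\C(S)$; both are valid ways to finish.
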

\begin{proof}
For a non-filling current $\mu$, they show in \cite[Theorem 4.1]{BIPP} that there is some measured lamination $\lambda$ so that $i(\mu, \lambda) = 0 = \sys(\mu)$. 

In the case where $\mu$ is a filling current, fix a hyperbolic metric $X$. We note that the function $f$ sending each $\lambda \in \C(S)$ to $\frac{i(\mu, \lambda)}{\ell_X(\lambda)}$ is invariant under scaling $\lambda$, and so is in fact a function $f: \PC(S) \to \R$. Since $\PC(S)$ is compact, this function attains its minimum. Moreover, as $\mu$ is filling, $i(\mu, \lambda) > 0$ for all $\lambda$. Thus, there is some constant $c$ so that 
\[
 c \leq \frac{i(\mu, \lambda)}{\ell_X(\lambda)}
\]
for each $\lambda \in \C(S)$. In particular, suppose $\sys(\mu) = d$. Then for all simple closed curve $\alpha$ so that $i(\mu, \alpha) < 2d$, we have $\ell_X(\alpha) < 2d/c$. Since $\sys(\mu) = \inf_\alpha i(\mu, \alpha)$, we can just take the infimum over those simple closed curves $\alpha$ with length at most $2d/c$ with respect to $X$. But this is a finite set, and so the infimum is realized at some simple closed curve.
\end{proof}

\subsection{Notation}
Given quantities $f, g$, we use say $f\prec g$ with constants depending only on some quantity $C$ if there is a constant $c$ depending only on $C$ so that $f \leq c g$. Likewise, we say $f \succ g$ if $f \geq c' g$ for some $c'>0$ depending only on $C$, and $f \asymp g$ if $f \prec g$ and $g \prec f$.

\section{Well-defined on $\PC_{fill}(S)$}
\label{sec:Well-defined}
If we fix $\lambda \in \C(S)$, we can define the function sending a pair $\mu, \nu \in \C(S)$ to $\frac{i(\nu, \lambda)}{i(\mu, \lambda)}$. This is defined only on pairs of currents $\mu$ and $\nu$, and does not descend to pairs of projective currents. Still, we show the following:
\begin{lem}
\label{lem:Function_on_Projective_Currents}
 The function $d_{Th}$ is in fact well-defined as a function from pairs of filling projective currents to $\R$.
\end{lem}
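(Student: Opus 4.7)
The plan is to verify two separate things: that each of the two summands is a finite real number (so that $d_{Th}$ actually takes values in $\R$), and that their sum does not depend on the choice of representatives $\mu, \nu \in \C_{fill}(S)$ of the projective classes $[\mu], [\nu]$.

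For finiteness, fix representatives $\mu, \nu$. Since the intersection form $i(\cdot, \cdot)$ is bilinear, the ratio $\lambda \mapsto \frac{i(\nu,\lambda)}{i(\mu,\lambda)}$ is invariant under positive scaling of $\lambda$, so it descends to a function $F: \PC(S) \to (0,\infty]$. Because $\mu$ is filling, $i(\mu,\lambda) > 0$ for every nonzero $\lambda \in \C(S)$, so the denominator never vanishes; since $\nu$ is also filling, the numerator is finite and strictly positive, hence $F$ takes values in $(0,\infty)$ and is continuous by continuity of $i$. Now $\PC(S)$ is compact, so $F$ attains a finite positive maximum; taking logs, the first supremum is a finite real number. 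The same argument works for the second supremum (swapping $\mu$ and $\nu$).

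For well-definedness on projective classes, I would replace $\mu$ with $s\mu$ and $\nu$ with $t\nu$ for arbitrary $s, t > 0$ and check that the sum is unchanged. By bilinearity of $i$,
\[
\sup_{\lambda \in \C(S)} \log \frac{t\, i(\nu,\lambda)}{s\, i(\mu,\lambda)} = \log(t/s) + \sup_{\lambda \in \C(S)} \log \frac{i(\nu,\lambda)}{i(\mu,\lambda)},
\]
and symmetrically the second summand picks up an additive $\log(s/t)$. Adding, the constants cancel, and we recover the original expression.

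The only real subtlety is the finiteness step, which crucially uses both the filling hypothesis on $\mu$ and $\nu$ (to keep the ratio bounded away from $0$ and $\infty$ pointwise) and the compactness of $\PC(S)$ (to upgrade pointwise finiteness of a continuous function to a finite supremum). The cancellation that gives projective invariance is then an immediate computation.
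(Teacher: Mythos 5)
Your proof is correct and follows essentially the same route as the paper: the same scaling computation to show $d_{Th}(t\mu, s\nu) = d_{Th}(\mu,\nu)$, and the same appeal to the filling hypothesis plus compactness of $\PC(S)$ (after observing the ratio is scale-invariant in $\lambda$) to get finiteness of each supremum. You spell out the cancellation of $\log(t/s)$ and $\log(s/t)$ that the paper calls a ``straightforward computation,'' which is a fine amount of detail.
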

\begin{proof}
Let $\mu, \nu \in \C_{fill}(S)$, and let $t,s \in \R_{>0}$. 
Then a straightforward computation gives $d_{Th}(t\mu, s\nu) = d_{Th}(\mu, \nu)$. Thus, $d_{Th}$ descends to a function on projective currents.

We need to check that $d_{Th}(\mu, \nu) < \infty$ for all pairs of filling currents. (This is not true if $\mu$ or $\nu$ is not filling, as shown in Lemma \ref{lem:complete_metric} below.) Consider the function $f: \PC(S) \to \R$ given by 
\[
 f(\lambda) = \frac{i(\mu, \lambda)}{i(\nu,\lambda)}
\]
Since $\mu, \nu$ are filling, neither numerator nor denominator is ever 0. Then $f$ is a continuous function on a compact set, and so it achieves a (non-zero) maximum. In particular, $d_{Th}(\mu, \nu) < \infty$ for all pairs of filling currents $\mu, \nu$.
\end{proof}

Thus, $d_{Th}(\mu, \nu)$ is a function on pairs of projective currents, even though the two summands are not. However, we can rewrite $d_{Th}$ as the sum of two functions on projective currents. Given two currents $\mu, \nu \in \C(S)$, we can define the $S : \C(S) \times \C(S) \to \R$ by
\begin{equation}
\label{eq:S}
 S(\mu, \nu) = \frac{\sys(\mu)}{\sys(\nu)} \sup_{\lambda \in \C(S)} \frac{i(\nu, \lambda)}{i(\mu, \lambda)}
\end{equation}
In fact, $S$ is scale-invariant, and so depends only on the class $[\mu], [\nu] \in \PC_{fill}(S)$. 

From $S$, we can define the function
\[
 d_S : \PC_{fill}(S) \times \PC_{fill}(S) \to \R
\]
by
\begin{equation}
\label{eq:d_S}
 d_S(\mu, \nu) = \log S(\mu, \nu)
\end{equation}
Unfortunately, it turns out that $d_S$ is not only asymmetric, but also degenerate, as we show in Lemma \ref{lem:Asymetric_Degenerate}. However, when we symmetrize it, we get back our function $d_{Th}$. In fact, 
\begin{align*}
 d_S(\mu, \nu) + d_S(\nu, \mu)
 & = \log S(\mu, \nu) + \log S(\nu, \mu) \\
  & = \log \big( S(\mu, \nu) S(\nu, \mu)\big) \\ 
  & = d_{Th}(\mu, \nu)
\end{align*}
because a term of the form $\frac{\sys(\mu)}{\sys(\nu)} \cdot \frac{\sys(\nu)}{\sys(\mu)}$ goes away.  This gives another proof that $d_{Th}$ is well-defined as a function from $\PC_{fill}(S) \times \PC_{fill}(S)$ to $\R$, although this fact can be shown directly.

The function $d_S$ defined in Equations \ref{eq:S} and \ref{eq:d_S} will be useful for the remainder of the results, even if it is not, in itself, a metric on $\PC_{fill}(S)$.

\section{Non-degeneracy}
\begin{lem}
 The function $d_{Th}$ is non-degenerate. That is, for all $[\mu], [\nu] \in \PC_{fill}(S)$, $d_{Th}(\mu, \nu) \geq 0$, and it is equal to 0 if and only if $\mu = \nu$. 
\end{lem}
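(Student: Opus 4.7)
The plan is to combine compactness of $\PC(S)$ (already used in Lemma \ref{lem:Function_on_Projective_Currents}) with Bonahon's result that a geodesic current is determined by its intersection numbers with simple closed curves. The argument splits naturally into non-negativity and the characterization of equality.

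For non-negativity I would use the trivial observation that each of the two suprema in $d_{Th}$ is bounded below by the value of its argument at any single $\lambda \in \C(S)$, so
\[
 d_{Th}([\mu],[\nu]) \;\geq\; \log \frac{i(\nu,\lambda)}{i(\mu,\lambda)} + \log \frac{i(\mu,\lambda)}{i(\nu,\lambda)} \;=\; 0.
\]
This step needs no compactness and no extra input beyond the filling hypothesis needed for both ratios to make sense.

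For the equality case, I would fix representatives $\mu,\nu \in \C_{fill}(S)$ and consider the function $f(\lambda) = i(\nu,\lambda)/i(\mu,\lambda)$. Exactly as in Lemma \ref{lem:Function_on_Projective_Currents}, the filling hypothesis guarantees that $f$ descends to a continuous, strictly positive function on the compact space $\PC(S)$, and hence attains a maximum $M$ and a minimum $m$ with $0 < m \leq M < \infty$. The two summands of $d_{Th}$ are then $\log M$ and $-\log m$, so $d_{Th}([\mu],[\nu]) = \log(M/m)$; this vanishes iff $M = m$, iff $f \equiv c$ for some constant $c>0$ on all of $\PC(S)$. Unwinding, this says $i(\nu,\lambda) = c\cdot i(\mu,\lambda)$ for every current $\lambda$, and in particular for every simple closed curve $\alpha$.

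The final step, and the only one invoking external input, is Bonahon's theorem from \cite{Bon88}: the map $\eta \mapsto (i(\eta,\alpha))_\alpha$ taken over simple closed curves $\alpha$ is injective on $\C(S)$. Applied here, this forces $\nu = c\mu$, hence $[\mu] = [\nu]$ in $\PC_{fill}(S)$; the converse is immediate since $\nu = c\mu$ makes $f$ identically $c$ and the two suprema cancel. The only substantive ingredient is the appeal to Bonahon's injectivity; everything else is an immediate extraction from the compactness argument already made in Lemma \ref{lem:Function_on_Projective_Currents}, so I do not expect any real obstacle.
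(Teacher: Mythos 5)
Your non-negativity argument (evaluate both suprema at the same $\lambda$) is cleaner and more direct than what the paper does, and the reduction via $M = \max f$, $m = \min f$ and $d_{Th} = \log(M/m)$ is correct and parallels the compactness argument from Lemma \ref{lem:Function_on_Projective_Currents} in a tidy way. So the structure is sound up to the point where you conclude $i(\nu,\lambda) = c\,i(\mu,\lambda)$ for every geodesic current $\lambda$.

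The final step, however, has a genuine gap: you throw away most of what you've proved by restricting to simple closed curves, and then invoke an injectivity statement that is \emph{false}. It is not true that a geodesic current is determined by its intersection numbers with simple closed curves, and there is no such theorem in \cite{Bon88}. Indeed, the paper's Remark \ref{rem:Supremum_over_all_currents}, together with the Birman--Series example that follows it, exhibits two distinct filling currents with identical simple marked length spectra; this is precisely why the definition of $d_{Th}$ takes a supremum over \emph{all} currents rather than over $\ML(S)$. What \emph{is} true, and what the paper uses, is Otal's rigidity theorem \cite{Otal90}: a geodesic current is determined by its intersection numbers with all closed curves (simple and non-simple), i.e., by its full marked length spectrum. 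Your argument already delivers $i(\nu,\lambda) = c\,i(\mu,\lambda)$ for every closed curve $\lambda$, so the repair is trivial: keep the equality over all closed curves and cite Otal in place of Bonahon. With that substitution the proof is complete and otherwise matches the paper's in substance.
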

\begin{proof}
 For what follows, it is useful to view $d_{Th}(\mu, \nu)$ as $d_S(\mu, \nu) + d_S(\nu, \mu)$, where $d_S$ is defined in Equations (\ref{eq:S}) and (\ref{eq:d_S}). We will first show that $d_S(\mu, \nu) \geq 0$ for all $\mu, \nu \in \C(S)$. In fact, let $\lambda \in \ML(S)$ be a measured lamination that realizes the systole of $\mu$, so $\sys(\mu) = i(\mu, \lambda)$. Then
 \[
  \frac{\sys(\mu)}{\sys(\nu)} \frac{i(\nu, \lambda)}{i(\mu, \lambda)}  =  \frac{i(\nu, \lambda)}{\sys(\nu)}
 \]
Since $i(\nu, \lambda) \geq \sys(\nu)$ for all currents $\lambda$, this quantity is bounded below by 1. Thus, $d_S(\mu, \nu) \geq 0$. (Note that it is not true that the two summands used to define $d_{Th}(\mu, \nu)$ are always non-negative.)

Next, it is clear that if $\mu = \nu$, then $d_{Th}(\mu, \nu) = 0$. So suppose that we have $\mu, \nu \in \C(S)$ for which $d_{Th}(\mu, \nu) = 0$. Since $d_S$ is non-negative, this means that $d_S(\mu, \nu) = d_S(\nu, \mu) = 0$. So $d_S(\mu, \nu) = 0$ implies that
\[
 S(\mu, \nu) = \frac{\sys(\mu)}{\sys(\nu)} \sup_{\lambda \in \C(S)} \frac{i(\nu, \lambda)}{i(\mu, \lambda)} = 1
\]
In particular, $i(\nu, \lambda) \sys(\nu) \leq i(\mu, \lambda) \sys(\mu)$ for all $\lambda$, while on the other hand, $d_S(\nu,\mu) = 0$ implies the inequality in the other direction. Thus,
\[
 i(\nu, \lambda) \sys(\nu) = i(\mu, \lambda) \sys(\mu)
\]
for all $\lambda \in \C(S)$. In particular, this equality is true for all closed curves $\lambda$. Since $\mu, \nu$ are filling currents, they have positive systole \cite{BIPP}. So we can scale them so that their systole is 1, meaning that their marked length spectra are the same. But by work of Otal \cite{Otal90}, this means that $\mu = \nu$ as projective currents.
 
\end{proof}

The above proof relied on the fact that if $d_{Th}(\mu, \nu) = 0$, then the marked length spectra of $\mu$ and $\nu$ were scalar multiples of each other.  However, if $d_S(\mu, \nu) = 0$, then that just means that the marked length spectrum of one is strictly smaller than the marked length spectrum of some multiple of the other. This observation allows us to show that $d_S$ is degenerate.

\begin{lem}
\label{lem:Asymetric_Degenerate}
 The asymmetric function $d_S$ is degenerate. That is, we can find examples of $\mu \neq \nu$ for which $d_S(\mu, \nu) = 0$.
\end{lem}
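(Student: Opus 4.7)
My plan is to produce an explicit pair $\mu \neq \nu$ with $d_S(\mu,\nu) = 0$ by adding a well-chosen simple closed curve to a filling current. The starting point is to unpack when $S(\mu,\nu) = 1$. By the argument in the preceding lemma, plugging in the measured lamination $\lambda_0$ that realizes $\sys(\mu)$ already gives $S(\mu,\nu) \geq 1$. So $d_S(\mu,\nu) = 0$ holds precisely when, after rescaling representatives so that $\sys(\mu) = \sys(\nu)$, we have
\[
 i(\nu, \lambda) \leq i(\mu, \lambda) \quad \text{for every } \lambda \in \C(S).
\]
In other words, the marked length spectrum of $\mu$ dominates that of $\nu$ pointwise, but they agree on systole length. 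The goal is thus to produce a pair with this one-sided domination but with $\mu \neq \nu$ projectively.

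The natural construction is to set $\mu = \nu + \alpha$, where $\alpha$ is a simple closed curve chosen so that adding it does not change the systole. Concretely, I would take $\nu$ to be any filling current (for instance a hyperbolic metric $X \in \T(S)$) whose systole is realized by a simple closed curve $\beta$, and take $\alpha$ to be any simple closed curve disjoint from $\beta$ (which exists since $g \geq 2$). The current $\mu = \nu + \alpha$ is automatically filling because $i(\mu, \gamma) \geq i(\nu, \gamma) > 0$ for every current $\gamma$.

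The bilinearity of $i(\cdot,\cdot)$ gives $i(\mu, \lambda) = i(\nu, \lambda) + i(\alpha, \lambda) \geq i(\nu, \lambda)$ for every $\lambda$, which is the required domination. For the systole equality, note $\sys(\mu) \geq \sys(\nu)$ since intersection with $\alpha$ is nonnegative, while $i(\mu, \beta) = i(\nu, \beta) + i(\alpha, \beta) = \sys(\nu) + 0 = \sys(\nu)$, so $\sys(\mu) = \sys(\nu)$. Plugging into $S(\mu, \nu)$ with the normalization $\sys(\mu) = \sys(\nu)$ yields $S(\mu, \nu) = 1$, hence $d_S(\mu, \nu) = 0$.

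The only remaining check is that $\mu \neq \nu$ as projective currents. If $\mu = t\nu$ for some $t > 0$, then $\alpha = (t-1)\nu$ as currents, forcing $\nu$ to be supported on the single geodesic underlying $\alpha$; but $\nu$ was chosen with support not equal to a single closed geodesic (e.g.\ the support of a hyperbolic metric is all of $\G$), a contradiction. I do not expect any serious obstacle here: the main point is simply to recognize, from the previous proof of non-degeneracy of $d_{Th}$, that the asymmetric summand $d_S$ only records one-sided domination of marked length spectra, and that this one-sided domination is easy to arrange by adding a disjoint simple closed curve to a systole-realizing configuration.
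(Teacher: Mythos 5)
Your construction is correct and is essentially the same as the paper's: both add a simple closed curve disjoint from a systole-realizing curve of a base filling current, so that the systole is unchanged but the marked length spectrum is dominated one-sidedly, forcing one $d_S$ summand to vanish. The only cosmetic difference is that the paper builds the base current from a marking (with $\alpha_1$ realizing the systole and a disjoint $\alpha_n$ added with extra weight) while you use a hyperbolic metric; you also state the vanishing term in the correct direction --- with the paper's labels $\nu = \mu + \alpha_n$ one-sided domination gives $d_S(\nu,\mu)=0$, whereas you set $\mu = \nu + \alpha$ and correctly conclude $d_S(\mu,\nu)=0$.
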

\begin{proof}
 Let $n \geq 3$. Let $\{\alpha_1, \dots, \alpha_n\}$ be a collection of simple closed curves so that $\alpha_1 + \dots + \alpha_n$ is a filling current (Figure \ref{fig:Degenerate}). We can arrange things so that $\alpha_1$ intersects $\alpha_2$ exactly once, and does not intersect any other curve in the list. Then let 
 \[
  \mu = \alpha_1 + \dots + \alpha_n, \text{ and } \nu = \alpha_1 + \dots + \alpha_{n-1} + 2 \alpha_n
 \]
 \begin{figure}[h!]
  \centering 
  \includegraphics{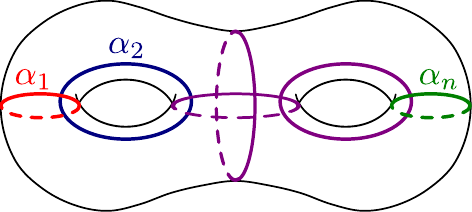}
   \caption{The only difference between $\mu$ and $\nu$ is that in $\mu$, $\alpha_n$ has weight 1, and in $\nu$ it has weight 2.}
  \label{fig:Degenerate}
 \end{figure}

Since $i(\alpha_1, \alpha_2) = i(\alpha_1, \mu) = i(\alpha_1, \nu) =1$, and $i(\beta, \mu), i(\beta, \nu) \in \N$ for all simple closed curves $\beta$, we have that $\sys(\mu) = \sys(\nu) = 1$. Moreover, for all currents $\lambda$, 
\begin{align*}
 i(\lambda, \mu) & = \sum_1^n i(\lambda, \alpha_i) \\
  & \leq 2 i(\lambda, \alpha_n) + \sum_1^{n-1} i(\lambda, \alpha_i) \\
  & = i(\lambda, \nu)
\end{align*}
Since $i(\lambda, \mu) \leq i(\lambda, \mu)$ for all $\lambda \in \C(S)$, and $i(\alpha_1, \mu) = i(\alpha_1, \nu)$, we have
\[
 d_S(\mu, \nu) = 0
\]
for $\mu \neq \nu$.
\end{proof}

\section{Triangle inequality}
We now complete the proof that $d_{Th}(\mu, \nu)$ is a metric.
\begin{lem}
 The function $d_{Th}$ satisfies the triangle inequality.
\end{lem}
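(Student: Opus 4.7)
The plan is to exploit the multiplicativity of the ratios inside the supremum, which translates to additivity after taking logarithms. For any fixed representatives $\mu_1, \mu_2, \mu_3 \in \C_{fill}(S)$ of three projective classes, and any $\lambda \in \C(S)$, the elementary identity
\[
\frac{i(\mu_3, \lambda)}{i(\mu_1, \lambda)} = \frac{i(\mu_3, \lambda)}{i(\mu_2, \lambda)} \cdot \frac{i(\mu_2, \lambda)}{i(\mu_1, \lambda)}
\]
makes sense as all quantities are strictly positive (by the filling hypothesis), and taking logarithms yields
\[
\log \frac{i(\mu_3, \lambda)}{i(\mu_1, \lambda)} = \log \frac{i(\mu_3, \lambda)}{i(\mu_2, \lambda)} + \log \frac{i(\mu_2, \lambda)}{i(\mu_1, \lambda)}.
\]

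First I would introduce the asymmetric quantity
\[
d^+(\mu, \nu) = \sup_{\lambda \in \C(S)} \log \frac{i(\nu, \lambda)}{i(\mu, \lambda)},
\]
which a priori depends on the chosen representatives, but is finite by the argument in Lemma \ref{lem:Function_on_Projective_Currents}. The displayed identity above, combined with bounding each term on the right by its supremum over $\lambda$ and then taking the supremum on the left, shows that $d^+$ satisfies the triangle inequality on $\C_{fill}(S)$:
\[
d^+(\mu_1, \mu_3) \leq d^+(\mu_1, \mu_2) + d^+(\mu_2, \mu_3).
\]
By the same argument applied to the reciprocals, the quantity $d^-(\mu, \nu) = \sup_{\lambda} \log \frac{i(\mu, \lambda)}{i(\nu, \lambda)}$ also satisfies the triangle inequality.

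Adding these two inequalities gives
\[
d^+(\mu_1, \mu_3) + d^-(\mu_1, \mu_3) \leq \bigl( d^+(\mu_1, \mu_2) + d^-(\mu_1, \mu_2) \bigr) + \bigl( d^+(\mu_2, \mu_3) + d^-(\mu_2, \mu_3) \bigr),
\]
and the left side is by definition $d_{Th}(\mu_1, \mu_3)$ while the right side is $d_{Th}(\mu_1, \mu_2) + d_{Th}(\mu_2, \mu_3)$. Although $d^+$ and $d^-$ individually depend on the choice of representatives, their sums paired as above are precisely $d_{Th}$ evaluated on the projective classes (by Lemma \ref{lem:Function_on_Projective_Currents}), so the inequality descends to $\PC_{fill}(S)$.

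The only subtle point, and the thing I would be most careful to track, is that $d^+$ and $d^-$ are \emph{not} functions on $\PC_{fill}(S)$ separately, so the intermediate inequalities must be understood as statements about specific representatives in $\C_{fill}(S)$; it is only after summing $d^+$ and $d^-$ that one obtains a genuine inequality between projective invariants. There are no other obstacles: the triangle inequality for $d_{Th}$ is essentially a formal consequence of the subadditivity of supremum and the fact that $\log$ converts products into sums.
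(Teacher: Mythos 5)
Your proof is correct and is essentially the same argument as the paper's. The paper works with the $\sys$-normalized quantity $S(\mu,\nu) = \frac{\sys(\mu)}{\sys(\nu)}\sup_\lambda \frac{i(\nu,\lambda)}{i(\mu,\lambda)}$ and shows $S(\mu,\nu)S(\nu,\eta) \geq S(\mu,\eta)$ by canceling the $\sys$ terms and using that the product of suprema dominates the supremum of the product --- which is exactly your subadditivity-of-sup step after taking logs. Since the $\sys$ factors cancel immediately in that computation, your unnormalized version is the same argument with one cosmetic step removed; your handling of the representative-dependence of $d^+$ and $d^-$ is sound, since only the sum $d_{Th}$ needs to descend to $\PC_{fill}(S)$ and that was established in Lemma~\ref{lem:Function_on_Projective_Currents}.
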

\begin{proof}
 Let $\mu, \nu, \eta$ be three filling currents. We have that $d_{Th}(\mu, \eta) = d_S(\mu, \eta) + d_S(\eta, \mu)$. So 
 \[
 d_{Th}(\mu, \nu) + d_{Th}(\nu, \eta) = d_S(\mu, \nu) + d_S(\nu, \eta) + d_S(\nu, \mu) + d_S(\eta, \nu)
 \]
 Since $d_S = \log S$, we must, therefore, show that 
 \[
  S(\mu, \eta)S(\nu, \eta) \leq S(\mu, \nu)S(\nu, \eta)S(\nu, \mu)S(\eta, \nu)
 \]
 We will do this in two parts. First, 
 \begin{align*}
  S(\mu, \nu)S(\nu, \eta) & =  \frac{\sys(\mu)}{\sys(\nu)} \sup_{\lambda \in \C(S)} \frac{i(\nu, \lambda)}{i(\mu, \lambda)} \cdot  \frac{\sys(\nu)}{\sys(\eta)} \sup_{\lambda \in \C(S)} \frac{i(\eta, \lambda)}{i(\nu, \lambda)} \\
  & \geq \frac{\sys(\mu)}{\sys(\eta)} \sup_{\lambda \in \C(S)} \frac{i(\eta, \lambda)}{i(\mu, \lambda)}\\
  & = S(\mu, \eta)
 \end{align*}
where we use the fact that the $\sys(\nu)$ terms cancel, and that the product of suprema if greater than the supremum of the product. Likewise, the same arguments give us 
\[
S(\nu, \mu) S(\eta, \nu) \geq S(\eta, \mu)
\]
And so, we are done.
\end{proof}
Note that this proves that $d_S$ also satisfies the triangle inequality.

\section{Restriction to Thurston metric on $\T(S)$}
\begin{lem}
 Our metric is exactly the symmetrized Thurston metric when restricted to $\T(S) \subset \PC_{fill}(S)$.
\end{lem}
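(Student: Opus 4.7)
The statement reduces to showing that for any $X, Y \in \T(S)$ viewed as elements of $\C_{fill}(S)$ via Bonahon's embedding,
\[
\sup_{\lambda \in \C(S)} \frac{i(Y, \lambda)}{i(X, \lambda)} \;=\; \sup_{\alpha} \frac{\ell_Y(\alpha)}{\ell_X(\alpha)},
\]
where the right-hand supremum ranges over simple closed curves; symmetrizing then yields the full equality with the classical symmetrized Thurston metric. Bonahon's characterization of the embedding gives $i(X, \lambda) = \ell_X(\lambda)$ (and similarly for $Y$), so the inequality $\geq$ is immediate: each simple closed curve $\alpha$ is itself a current, and intersecting with $X$ or $Y$ recovers its hyperbolic length.

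For the reverse inequality, I would invoke Thurston's original theorem from \cite{Thurston88} identifying the right-hand side with the optimal Lipschitz constant, i.e. $\log \sup_\alpha \ell_Y(\alpha)/\ell_X(\alpha) = \inf_{f \colon X \to Y} \log L(f)$. Any $L$-Lipschitz map $f \colon X \to Y$ shortens the length of every rectifiable loop by a factor of at most $L$, so $\ell_Y(\gamma) \leq L \cdot \ell_X(\gamma)$ for every closed geodesic $\gamma$, equivalently $i(Y, \gamma) \leq L \cdot i(X, \gamma)$ when $\gamma$ is viewed as a current. To promote this pointwise inequality from closed curves to all of $\C(S)$, I would use that closed geodesics are projectively dense in $\PC(S)$ (a classical fact due to Bonahon \cite{Bon88}) together with the continuity of the ratio $[\lambda] \mapsto i(Y, \lambda)/i(X, \lambda)$ on $\PC(S)$. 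Continuity holds because $X$ and $Y$ are filling, so both $i(X, \cdot)$ and $i(Y, \cdot)$ are strictly positive continuous functions on the compact space $\PC(S)$, and their quotient is continuous and scale-invariant. Thus $i(Y, \lambda) \leq L \cdot i(X, \lambda)$ extends to every $\lambda$, and taking the infimum over $L$ delivers the $\leq$ bound.

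The only nontrivial ingredient is the extension from closed curves to general currents, which rests on Bonahon's density result together with continuity; once that is in place, the argument is a direct application of Thurston's theorem on both sides, followed by symmetrization. I do not expect any serious obstacle, since all the needed tools (Bonahon's embedding and density, Thurston's characterization, compactness of $\PC(S)$) are classical and well-suited to the formulation of $d_{Th}$ given in Theorem \ref{thm:ProjThm}.
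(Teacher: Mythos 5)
Your proposal is correct and follows essentially the same route as the paper: both rest on Thurston's identification of $\sup_\alpha \ell_Y(\alpha)/\ell_X(\alpha)$ (over simple closed curves) with the optimal Lipschitz constant, use the Lipschitz map to control the ratio for all closed geodesics, and then pass from closed geodesics to arbitrary currents via projective density and continuity of the intersection form. The only cosmetic difference is organization — you prove the two inequalities directly, whereas the paper runs the chain ``supremum over currents $=$ supremum over closed geodesics $=$ supremum over simple closed curves'' — but the ingredients and logical content are identical.
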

\begin{proof}
For all $X, Y \in \T(S)$, we symmetrized the Thurston metric on $\T(S)$ by
\[
 d_{Th}(X,Y) = \sup_{\lambda \in \ML(S)} \log \frac{i(X, \lambda)}{i(Y, \lambda)} + \sup_{\lambda \in \ML(S)} \log \frac{i(Y, \lambda)}{i(X, \lambda)} 
\]
All we have to show is that taking the supremum over measured laminations is the same as taking the supremum over all currents. Since weighted closed geodesics are dense in the space of currents, and all functions involved are continuous, we first note that instead of taking a supremum over all currents, we can simply take a supremum over all closed geodesics. 

Thurston showed that $\sup_{\lambda \in \ML(S)} \log \frac{i(X, \lambda)}{i(Y, \lambda)}$ is, in fact, the same as $\inf \log L$, where the infimum is taken over all Lipschitz constants $L$ of maps $f: X \to Y$ \cite{Thurston88}. Moreover, he showed that the infimum is realized by some map. So suppose $f: X \to Y$ is a map with minimal Lipschitz constant $L$. Then, let $\gamma$ be any closed geodesic on $X$. Then the length of $f(\gamma)$ with respect to $Y$ is at most $L \ell_X(\gamma)$. In particular, $\ell_Y(\gamma) \leq L \ell_X(\gamma)$ for all closed geodesics $\gamma$. So, in fact, 
\[
 \sup_{\gamma - \text{ closed}} \frac{i(Y, \gamma)}{i(X,\gamma)} \leq L =  \sup_{\gamma - \text{ simple}} \frac{i(Y, \gamma)}{i(X,\gamma)} 
\]
and therefore, taking the supremum over all closed curves is the same as taking the supremum over all simple closed curves in the case of $\T(S)$.

It follows immediately that $d_{Th}$ is exactly the symmetrized Thurston metric.
\end{proof}
\begin{rem}
\label{rem:Supremum_over_all_currents}
For generic currents $\mu$ and $\nu$, we need to define $d_{Th}(\mu, \nu)$ as a supremum over all currents to get a non-degenerate metric, as it is possible for $\mu \neq \nu$ to have the same simple marked length spectrum, but impossible for them to have the same (non-simple) marked length spectrum by \cite{Otal90}.
\end{rem}
For example, we can choose a hyperbolic metric $X$ on $S$. Then by work of Birman and Series, the set of all simple geodesics is a nowhere dense subset of $X$ \cite{BS85}, and in fact, by \cite{BP07}, it misses some open ball whose radius depends only on the genus $g$. We can then modify the metric in such a ball slightly to get a new negatively curved metric $X'$ where the simple closed geodesics have the same length as in $X$. Since the set of negatively curved metrics embeds in the space of currents by \cite{Otal90}, this gives us two currents, coming from $X$ and $X'$, with the same simple marked length spectrum.

\section{Complete, proper metric}
\label{sec:Complete_metric}
We know that the (symmetrized) Thurston metric is complete on Teichmuller space, so $d_{Th}$ must be complete when restricted to $\T(S)$. We show that $d_{Th}$ is, in fact, complete on all of $\PC_{fill}(S)$.
\begin{lem}
\label{lem:complete_metric}
 The metric $d_{Th}$ is complete.
\end{lem}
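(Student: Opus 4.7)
The plan is to take a Cauchy sequence $([\mu_n])$ in $(\PC_{fill}(S), d_{Th})$, exhibit a subsequential limit in $\C(S)$, show that limit is filling, and then upgrade to $d_{Th}$-convergence. First I would pick representatives $\mu_n$ with $\sys(\mu_n) = 1$, which is possible since each $\mu_n$ is filling. With this normalization the two halves $d_S([\mu_n],[\mu_1])$ and $d_S([\mu_1],[\mu_n])$ of $d_{Th}$ reduce to $\log \sup_\lambda \tfrac{i(\mu_1,\lambda)}{i(\mu_n,\lambda)}$ and $\log \sup_\lambda \tfrac{i(\mu_n,\lambda)}{i(\mu_1,\lambda)}$. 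Each is nonnegative by the argument in the non-degeneracy lemma, so the Cauchy hypothesis bounds both by some constant $C$. Fixing a reference metric $X_0 \in \T(S)$ and evaluating the suprema at $\lambda = X_0$ yields
\[
e^{-C}\,\ell_{X_0}(\mu_1) \;\leq\; \ell_{X_0}(\mu_n) \;\leq\; e^{C}\,\ell_{X_0}(\mu_1),
\]
so the sequence $(\mu_n)$ lies in a compact subset of $\C(S)$ and, by Bonahon's compactness, admits a convergent subsequence $\mu_{n_k} \to \mu_\infty$ in $\C(S)$.

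Next I would show $\mu_\infty$ is filling. For every simple closed curve $\alpha$ the bound $i(\mu_1,\alpha)/i(\mu_n,\alpha) \leq e^{C}$ rearranges to $i(\mu_n,\alpha) \geq e^{-C} i(\mu_1,\alpha)$; taking $n = n_k \to \infty$ and then the infimum over simple closed curves gives $\sys(\mu_\infty) \geq e^{-C}\sys(\mu_1) = e^{-C} > 0$, so $\mu_\infty \in \C_{fill}(S)$.

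To finish I would show $d_{Th}([\mu_{n_k}],[\mu_\infty]) \to 0$. The key estimate is uniform convergence of $i(\mu_{n_k}, \lambda) \to i(\mu_\infty, \lambda)$ over the compact section $\mathcal{U} := \{\lambda \in \C(S) : \ell_{X_0}(\lambda) = 1\}$ of $\PC(S)$. This follows from joint continuity of $i(\cdot,\cdot)$ together with compactness of $\mathcal{U}$ by a standard subsequence-extraction argument. Since $\mu_\infty$ is filling, $\delta := \inf_{\lambda \in \mathcal{U}} i(\mu_\infty, \lambda) > 0$; combined with uniform convergence, $i(\mu_{n_k}, \lambda) > \delta/2$ on $\mathcal{U}$ for large $k$. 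Hence the ratios $i(\mu_{n_k},\lambda)/i(\mu_\infty,\lambda)$ and $i(\mu_\infty,\lambda)/i(\mu_{n_k},\lambda)$ both tend to $1$ uniformly on $\mathcal{U}$, and by scale-invariance in $\lambda$ the suprema over $\C(S)$ coincide with the suprema over $\mathcal{U}$. Therefore both logarithmic suprema entering $d_{Th}([\mu_{n_k}],[\mu_\infty])$ tend to $0$. A standard triangle-inequality argument then upgrades subsequential convergence to convergence of the full sequence, using that $([\mu_n])$ is Cauchy.

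The main obstacle I expect is the filling step: the Cauchy assumption only controls suprema of intersection ratios, so one must translate this analytic control into a genuine positive lower bound on the systole of the subsequential limit. A secondary technical point is verifying that the pointwise intersection convergence $i(\mu_{n_k},\lambda) \to i(\mu_\infty, \lambda)$ can be strengthened to uniform convergence on a compact section of $\PC(S)$, since only this uniformity makes the Thurston-style suprema behave well under taking the limit.
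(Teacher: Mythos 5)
Your proof is correct and follows essentially the same architecture as the paper's: extract a weak$^*$ subsequential limit from a Cauchy sequence, show the limit is filling, and then use continuity of the intersection pairing to upgrade weak$^*$ convergence to $d_{Th}$-convergence. The two places you differ are matters of presentation rather than substance, but they're worth noting. First, the paper argues the filling step contrapositively: it shows that if $\mu_n \to \mu$ weak$^*$ with $\mu$ non-filling, then $\sys(\mu_n)\to 0$ and hence $d_S(\lambda_0,\mu_n)\to\infty$ for any fixed filling $\lambda_0$, contradicting Cauchy. You argue directly, normalizing $\sys(\mu_n)=1$ and deducing the pointwise lower bound $i(\mu_\infty,\alpha)\ge e^{-C}i(\mu_1,\alpha)$ from the Cauchy bound, which gives $\sys(\mu_\infty)\ge e^{-C}>0$. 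These are two sides of the same systolic estimate. Second, where the paper simply asserts that $d_{Th}$ is ``continuous as a function'' on $\PC_{fill}(S)\times\PC_{fill}(S)$ (with the weak$^*$ topology) and leaves it there, you actually supply the proof: uniform convergence of $\lambda\mapsto i(\mu_{n_k},\lambda)$ on a compact cross-section $\mathcal{U}\subset\C(S)$, together with the positive lower bound $\inf_{\mathcal{U}} i(\mu_\infty,\cdot)>0$, forces the suprema of the ratios to converge to $1$. That is the precise content of the continuity claim the paper invokes, so your version is a useful expansion of the paper's argument rather than a different route. One cosmetic point: you should record (as you essentially do via $i(\mu_\infty, X_0) \ge e^{-C}\ell_{X_0}(\mu_1) > 0$) that the subsequential limit is nonzero before discussing its systole; the compactness of $\{\mu : \ell_{X_0}(\mu) \le R\}$ in $\C(S)$ includes the zero current.
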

\begin{proof}
 Since $d_{Th}: \PC_{fill}(S) \times \PC_{fill}(S) \to \R$ is continuous as a function, we just need to show that no sequence $\mu_n$ converging to a non-filling current $\mu$ is Cauchy with respect to $d_{Th}$. So suppose $\mu_n$ is a sequence in $\PC_{fill}(S)$ converging to some non-filling current $\mu$. It is enough to show that for every $[\lambda_0] \in \PC_{fill}(S)$, $d_{Th}(\lambda_0, \mu_n) \to \infty$.
 
 To see this, fix a filling current $\lambda_0$. Since $\lambda_0$ is filling, $i(\mu_n, \lambda_0) \geq \sys(\lambda_0)$ for each $n$. So we can bound $S(\nu, \mu_n)$ from below as follows:
 \begin{align*}
  S(\nu, \mu_n) & \geq \frac{\sys(\nu)}{\sys(\mu_n)} \frac{i(\mu_n, \lambda_0)}{i(\nu, \lambda_0)} \\
    & \geq \frac{\sys(\nu)}{\sys(\mu_n)} \frac{\sys(\lambda_0)}{i(\nu, \lambda_0)}
 \end{align*}
Since $\mu$ is not filling, $\sys(\mu) = 0$. The systole function is continuous on $\C(S)$, so $\sys(\mu_n) \to 0$ as $n \to \infty$. The rest of the terms on the right are constant, meaning that the left hand side goes to infinity with $n$. Thus, $d_S(\lambda_0, \mu_n) \to \infty$ as $n \to \infty$. As $d_{Th}(\lambda_0, \mu_n) = \log S(\lambda_0, \mu_n) + \log S(\mu_n, \lambda_0)$, we get that $d_{Th}(\lambda_0, \mu_n)$ diverges.
\end{proof}

It follows from this lemma, and the fact that $\PC(S)$ is compact, that the metric $d_{Th}$ is proper.
\begin{cor}
 The metric $d_{Th}$ is proper.
\end{cor}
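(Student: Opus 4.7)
The plan is to verify properness via sequential compactness of closed balls, leveraging compactness of the ambient space $\PC(S)$ together with Lemma \ref{lem:complete_metric}. Concretely, fix $[\nu] \in \PC_{fill}(S)$ and $R > 0$, and take an arbitrary sequence $([\mu_n])$ in the closed ball $\overline{B}_R([\nu])$ with respect to $d_{Th}$. I want to extract a subsequence converging in $d_{Th}$ to a point of $\overline{B}_R([\nu])$.

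First I would use compactness of $\PC(S)$ to pass to a subsequence, which I still denote $[\mu_n]$, that converges in the weak$^*$ topology to some $[\mu] \in \PC(S)$. The key step is then to rule out the possibility that $[\mu]$ is non-filling: if it were, Lemma \ref{lem:complete_metric} (more precisely, the argument in its proof, which uses only weak$^*$ convergence of the $\mu_n$ to a non-filling current together with continuity of $\sys$) would give $d_{Th}([\nu], [\mu_n]) \to \infty$, contradicting the bound $d_{Th}([\nu], [\mu_n]) \leq R$. Hence $[\mu] \in \PC_{fill}(S)$.

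Next I would use continuity of $d_{Th}$ on $\PC_{fill}(S) \times \PC_{fill}(S)$, which follows from the same argument as in the proof of Lemma \ref{lem:Function_on_Projective_Currents} (the suprema defining the two summands of $d_{Th}$ are attained by continuous functions on the compact space $\PC(S)$, and depend continuously on $\mu, \nu$ as long as these remain filling). Applying this to the convergent sequence $[\mu_n] \to [\mu]$ yields $d_{Th}([\nu], [\mu_n]) \to d_{Th}([\nu], [\mu])$, so the limit satisfies $d_{Th}([\nu], [\mu]) \leq R$ and thus lies in $\overline{B}_R([\nu])$. Convergence in $d_{Th}$ (as opposed to just weak$^*$) then also follows from the same continuity statement applied to pairs $([\mu_n], [\mu])$.

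The only potential obstacle is the subtle point that a priori the topology induced by $d_{Th}$ on $\PC_{fill}(S)$ could be finer than the subspace topology from $\PC(S)$; but the continuity of $d_{Th}$ in the weak$^*$ topology, together with Lemma \ref{lem:complete_metric}, shows that on filling currents the two topologies agree, so weak$^*$ convergence in this setting is the same as $d_{Th}$-convergence. Once this is noted, the argument above shows every $d_{Th}$-closed ball is sequentially compact, hence compact, which is exactly the definition of a proper metric.
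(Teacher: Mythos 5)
Your proposal is correct and follows essentially the same route as the paper: use compactness of $\PC(S)$ to extract a weak$^*$-convergent subsequence from a $d_{Th}$-bounded sequence, rule out a non-filling limit by the divergence argument from the proof of Lemma \ref{lem:complete_metric}, and then invoke weak$^*$-continuity of $d_{Th}$ to conclude the limit lies in the ball and is also a $d_{Th}$-limit. One small caveat: your parenthetical remark that ``the two topologies agree'' is stronger than what you have established (and than what is needed) --- weak$^*$-continuity of $d_{Th}$ only shows the weak$^*$ topology is at least as fine as the $d_{Th}$-topology, which already suffices since it makes weak$^*$-compactness imply $d_{Th}$-compactness.
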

\begin{proof}
 We just need to show that for each $L> 0$, the closed ball $\B(\lambda_0, L) = \{\mu \ | \ d_{Th}(\lambda_0, \mu) \leq L\}$ is compact. Since $d_{Th}$ is continuous as a function on pairs of currents, $\B(\lambda_0, L)$ is closed in $\PC_{fill}(S)$. But $\PC(S)$ is compact, so if $\B(\lambda_0, L)$ is not compact, then there is some sequence $\mu_n \in \B(\lambda_0, L)$ that converges to a non-filling current $\mu_\infty$. However, by the proof of the previous lemma, we get that $d_{Th}(\lambda_0, \mu_n)$ goes to infinity as $n \to \infty$. This contradicts the fact that $d_{Th}(\lambda_0, \mu_n) \leq L$ for all $L$. So the metric is proper.
\end{proof}

\section{No quasi-isometric projection}
\label{sec:No_projection}
Let $(X,d_X)$ and $(Y,d_Y)$ be metric spaces. Recall that $\phi: X \to Y$ is a $K,C$-quasi-isometric embedding if for each $x_1, x_2 \in X$, 
\[
 \frac 1K d_Y(\phi(x_1), \phi(x_2)) - C \leq d_X(x_1, x_2) \leq  K d_Y(\phi(x_1), \phi(x_2)) + C
\]
If, moreover, for each $y \in Y$ there is some $x \in X$ so that $d(\phi(x), y) < C$, then $\phi$ is a quasi-isometry. 

If $Y$ is isometrically embedded in $X$, then we say $\phi : X \to Y$ is a quasi-isometric projection if $\phi$ is a quasi-isometry that restricts to a quasi-isometry on $Y$ when viewed as a subset of $X$. Note that, a priori, a quasi-isometry only restricts to a quasi-isometric embedding on proper subsets.

We restate Theorem \ref{thm:No_projection} here:
\begin{ProjThm}
 There is no quasi-isometric projection from $\PC_{fill}(S)$ to $\T(S)$.
\end{ProjThm}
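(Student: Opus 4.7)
The plan is to argue by contradiction. Suppose $\phi: \PC_{fill}(S) \to \T(S)$ is a $(K,C)$-quasi-isometric projection. First I would reduce to showing that $\T(S)$ is coarsely dense in $(\PC_{fill}(S), d_{Th})$, and then exhibit a sequence of filling closed curves $\gamma_n$ on $S$ with $d_{Th}(\gamma_n, \T(S)) \to \infty$, contradicting coarse density.

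For the reduction, $\phi|_{\T(S)}: \T(S) \to \T(S)$ is a quasi-isometry of $\T(S)$ to itself and is therefore coarsely surjective: there is a constant $C' > 0$ such that every $X \in \T(S)$ lies within $C'$ of $\phi(Y)$ for some $Y \in \T(S)$. Given $\mu \in \PC_{fill}(S)$, apply this to $X = \phi(\mu)$ to produce $Y \in \T(S)$ with $d_{Th}(\phi(Y), \phi(\mu)) \leq C'$. The quasi-isometry inequality then gives
\[
d_{Th}(\mu, Y) \;\leq\; K\, d_{Th}(\phi(\mu), \phi(Y)) + KC \;\leq\; K(C+C'),
\]
so $d_{Th}(\mu, \T(S)) \leq K(C+C')$ uniformly in $\mu$. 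It thus suffices to produce filling currents arbitrarily far from $\T(S)$.

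For the construction, I would fix a filling closed curve $\gamma_0$ on $S$ and a simple closed curve $\alpha$ with $k := i(\gamma_0, \alpha) \geq 1$, and set $\gamma_n := T_\alpha^n(\gamma_0)$, the $n$-fold Dehn twist. Each $\gamma_n$ is a filling closed curve, so $\sys(\gamma_n) \geq 1$, and $i(\gamma_n, \gamma_n) = i(\gamma_0, \gamma_0) =: c_0$ is constant in $n$ since $T_\alpha$ is a mapping class. Using $\lambda = \gamma_n$ itself as a test current in the supremum defining $d_S$, and recalling that $d_{Th}(\gamma_n, X) \geq d_S(\gamma_n, X)$ (by non-negativity of $d_S$ from the non-degeneracy lemma), one obtains
\[
d_{Th}(\gamma_n, X) \;\geq\; \log\!\left(\frac{\sys(\gamma_n)}{\sys(X)}\cdot\frac{\ell_X(\gamma_n)}{c_0}\right) \;\geq\; \log\!\left(\frac{\ell_X(\gamma_n)}{c_0\,\sys(X)}\right).
\]
The key geometric input is the uniform twist-length bound $\ell_X(\gamma_n) \geq nk\, \ell_X(\alpha)$: between its $k$ crossings of $\alpha$, the geodesic representative of $T_\alpha^n(\gamma_0)$ winds $n$ full turns around $\alpha$ in the collar, each contributing at least $\ell_X(\alpha)$ to the length. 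Since $\ell_X(\alpha) \geq \sys(X)$, this forces $\ell_X(\gamma_n)/\sys(X) \geq nk$, hence $d_{Th}(\gamma_n, X) \geq \log(nk/c_0)$ uniformly in $X \in \T(S)$, which tends to $\infty$ with $n$.

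The main obstacle is justifying the twist-length bound uniformly across all of $\T(S)$, including the thin part where $\ell_X(\alpha)$ is small and collar geometry could a priori complicate matters. The asymptotic $\ell_X(T_\alpha^n\gamma_0) \sim nk\,\ell_X(\alpha)$ is classical for each fixed $X$, but the constants implicit in the $O(1)$ correction must be controlled independently of $X$. I would establish this by passing to the annular cover $\tilde{X}_\alpha$: the preimage of $\gamma_n$ consists of $k$ disjoint geodesic arcs, each with homotopical winding number $n$ about the core, and in a hyperbolic cylinder a bi-infinite geodesic with winding $n$ has length at least $n\ell_X(\alpha)$ regardless of the collar width. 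Summing over the $k$ arcs yields the desired bound, and assembling everything gives $d_{Th}(\gamma_n, \T(S)) \to \infty$, contradicting coarse density and ruling out any such quasi-isometric projection $\phi$.
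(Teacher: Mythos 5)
Your reduction to producing filling currents far from $\T(S)$ is correct and is essentially the same as the paper's closing argument: coarse surjectivity of $\phi|_{\T(S)}$ together with the quasi-isometry inequality for $\phi$ forces $\T(S)$ to be coarsely dense, so any sequence $\mu_n$ with $d_{Th}(\mu_n,\T(S)) \to \infty$ gives a contradiction.

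The construction itself, however, has a fatal flaw. Theorem \ref{thm:ProjThm} records that $d_{Th}$ is mapping class group invariant, and $\T(S)\subset\PC_{fill}(S)$ is a mapping-class-group-invariant subset. Your sequence $\gamma_n = T_\alpha^n(\gamma_0)$ lies in a single mapping class group orbit, so
\[
 d_{Th}(\gamma_n, \T(S)) = d_{Th}\bigl(T_\alpha^n\gamma_0,\, \T(S)\bigr) = d_{Th}\bigl(\gamma_0,\, T_\alpha^{-n}\T(S)\bigr) = d_{Th}(\gamma_0, \T(S))
\]
is \emph{constant} in $n$, not divergent. Concretely, the step that fails is the claim that $\ell_X(T_\alpha^n\gamma_0)\geq nk\,\ell_X(\alpha)$ \emph{uniformly over} $X\in\T(S)$. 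The amount the geodesic representative of $T_\alpha^n\gamma_0$ winds in the collar of $\alpha$ depends not only on $n$ but also on the Fenchel--Nielsen twist of $X$ about $\alpha$, and the supremum over $X$ lets $X$ ``chase'' the Dehn twist. Taking $X_n = T_\alpha^{-n}(X_0)$ for any fixed $X_0$ gives
\[
 \frac{\ell_{X_n}(\gamma_n)}{\sys(X_n)} = \frac{\ell_{X_0}(\gamma_0)}{\sys(X_0)},
\]
so the lower bound $\log(nk/c_0)$ on $d_{Th}(\gamma_n,X)$ is false for these $X_n$. The annular cover argument computes the winding of a lift correctly for a \emph{fixed} metric, but that winding is not $n$ for all $X$; it is $n$ plus a twist term that can be made arbitrarily negative by moving $X$ in $\T(S)$.

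The paper sidesteps this by leaving the mapping class group orbit entirely: it takes a marking $\M$, splits it into subsets $\M_1,\M_2$ supported on two subsurfaces, and forms $\mu_n = \sum_{\gamma\in\M_1}\gamma + n\sum_{\gamma\in\M_2}\gamma$. Because the weights are unequal and grow with $n$, no mapping class can rescale $\mu_n$ back to $\mu_1$. The divergence of $d_{Th}(\mu_n,\T(S))$ is then extracted from Bers pants decompositions: any $X$ has a short pants curve crossing $\M_1$, forcing one ratio $i(X,\cdot)/i(\mu_n,\cdot)$ to be bounded below, while a short pants curve crossing $\M_2$ forces the other ratio $i(\mu_n,\cdot)/i(X,\cdot)$ to be at least $n/d$. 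If you want to repair your argument while keeping the Dehn-twist flavor, you would need to break the symmetry, for example by taking a weighted sum $\gamma_0 + T_\alpha^n\gamma_0$ or by mixing curves from distinct orbits, and then re-examine the uniform lower bound; but as written the sequence $T_\alpha^n\gamma_0$ cannot work.
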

\begin{proof}
 We will first show that, for any $C > 0$, there is a current $\mu$ so that $d_{Th}(\mu, \T(S)) > C$. In fact, decompose $S$ into a union of subsurfaces $Y_1$ and $Y_2$, where both $Y_1$ and $Y_2$ have genus at least 1. Let $\p$ be a pants decomposition of $S$ that includes the boundaries of $Y_1$ and $Y_2$. Extend $\p$ to a marking $\M$ of $S$. That is, for each $\gamma \in \p$, choose a simple closed curve $\bar \gamma$ so that $1 \leq i(\gamma, \bar \gamma) \leq 2$, and for every $\gamma' \neq \gamma \in \p$, $i(\bar \gamma, \gamma') = 0$. 
 
 Let $\M_1 \subset \M$ be the set of curves in $\M$ that pass through the interior of $Y_1$ (Figure \ref{fig:Far_from_Teich}). So, if $\gamma$ is on the boundary of $Y_1$, then $\gamma \not \in \M_1$. But if $\bar \gamma$ is the curve transverse to $\gamma \subset \partial Y_1$, then $\bar \gamma \in \M_1$. Let $\M_2 = \M \setminus \M_1$. Now for each $n > 0$ define $\mu_n \in \C_{fill}(S)$ by 
 \[
  \mu_n = \sum_{\gamma \in \M_1} \gamma + n \sum_{\gamma \in \M_2} \gamma
 \]
So we weigh each curve in $\M_1$ by 1, and each curve in $\M_2$ by $n$.
\begin{figure}[h!]
 \centering 
 \includegraphics{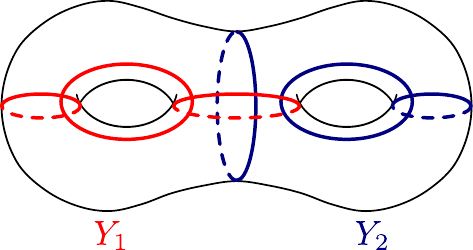}
 \caption{The red curves are in $\M_1$, and the blue curves are in $\M_2$.}
 \label{fig:Far_from_Teich}
\end{figure}

Take any $X \in \T(S)$. We will show that $d_{Th}(\mu_n, X) > n/ d$ where $d$ depends only on the genus $g$ of $S$. In fact, let $Q$ be a Bers pants decomposition of $X$. That is, the length of each curve in $Q$ is bounded by a constant depending only on $g$. As $Y_1$ has genus at least 1, there must be some curve $\beta_1\in Q$ that passes through $Y_1$. Thus, there is some $\gamma_1 \in \M$ contained entirely in $Y_1$ so that $i(\gamma_1, \beta_1) > 0$. Since $\gamma_1 \in \M$ lies entirely in $Y_1$, then it only intersects curves in $\M_1$. Thus, 
\[
 i(\mu_n, \gamma_1) < d
\]
for a constant $d$ depending only on $S$. Since the length of $\beta_1$ in $X$ is bounded from above, and $\beta_1$ intersects $\gamma_1$, the length of $\gamma_1$ must be bounded below. So up to modifying $d$ slightly, 
\[
i(X, \gamma_1) > \frac 1 d
\]
Thus, there is a $\gamma_1 \in \M_1$ so that
\[
 \frac{i(X, \gamma_1)}{i(\mu_n, \gamma_1)} > d^2
\]

Next, let $\beta_2 \in Q$ be a curve that passes through $Y_2$. Then $\beta_2$ intersects some $\gamma_2 \in \M_2$. As $\gamma_2$ has weight $n$ in $\mu_n$, we have 
\[
 i(\mu_n, \beta_2) \geq n
\]
But since $\beta_2$ is part of the Bers pants decomposition of $X$, we have that $i(X, \beta_2) < d$, up to slightly increasing the constant $d$ above. In other words, there is a $\beta_2 \in Q$ so that
\[
  \frac{i(\mu_n, \beta_2)}{i(X, \beta_2)} > \frac{n}{d}
\]
Therefore, 
\[
 d_{Th}(\mu_n, X) > \log n + \log d
\]
for every $X \in \T(S)$.

So suppose $\phi : \PC_{fill}(S) \to \T(S)$ were a quasi-isometric projection from projective filling currents to Teichm\"uller space. As $\phi$ is a quasi-isometry when restricted to $\T(S)$, there is some $C> 0$ so that for each $n$, there is some $X_n \in \T(S)$ for which
\[
 d_{Th}(\phi(\mu_n), \phi(X_n)) < C
\]
But we know that $d_{Th}(\phi(\mu_n), \phi(X_n)) > \log n + \log d$ for a fixed $d$. So by choosing $n$ large enough, we get a contradiction.
\end{proof}
 Using the lemmas in Section \ref{sec:When_curves_short} and the proof of Theorem \ref{thm:Diameter_unbounded_precise}, we can show that the current $\mu$ defined above is also far from its length-minimizing projection, and that in its length-minimizing projection, the boundary curves of $Y_1$ are short, and no other curves are short. However, the above construction only works when we decompose $S$ into surfaces with at least either 1 genus or 4 boundaries, while the construction for Theorem \ref{thm:Diameter_unbounded_precise} allows for more general decompositions.

\section{When curves are short}
\label{sec:When_curves_short}
Next, we turn to the projection $\pi : \PC_{fill}(S) \to \T(S)$ defined in \cite{HS21}. This projection is length minimizing in the sense that, for each $\mu \in \PC_{fill}(S)$, we have 
\[
 i(\mu, \pi(\mu)) < i(\mu, Y)
\]
for all $Y \neq \pi(\mu) \in \T(S)$. 

We cite two theorems from  \cite{Sapir_comparison} that characterize when curves in $\pi(\mu)$ are short, and when subsurfaces of $\pi(\mu)$ are thick, using just the intersection function of $\mu$. These theorems will allow us to come up with many examples of geodesic currents that are far away from their length-minimizing projections to $\T(S)$, and in fact, we can guarantee that these projections are in whichever thin part of $\T(S)$ we like.

The first theorem characterizes the short curves of $\pi(\mu)$:
\begin{theorem}[Theorem 1.2(ii), \cite{Sapir_comparison}]
\label{thm:Short_curves}
 For every $\epsilon > 0$, there is a constant $N$ so that for any $\mu \in PC_{fill}(S)$, any simple closed curve $\alpha$, and any simple closed curve $\beta$ with $i(\alpha, \beta) \geq 1$, if
  \[
   i(\mu, \beta) > N i(\mu, \alpha)
  \]
  then $\ell_{\pi(\mu)}(\alpha) < \epsilon$.

\end{theorem}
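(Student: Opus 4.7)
The plan is to prove the contrapositive: assuming $X_0 := \pi(\mu)$ satisfies $\ell_{X_0}(\alpha) \geq \epsilon$, derive an upper bound of the form $i(\mu, \beta) \leq N(\epsilon)\, i(\mu, \alpha)$ uniformly in $\beta$, with $N(\epsilon)$ depending only on $\epsilon$ and the topology of $S$. The core tool is the length-minimizing property of $\pi$: for every $Y \in \T(S)$, $i(\mu, Y) \geq i(\mu, X_0)$. The strategy is to construct explicit comparison metrics $Y \in \T(S)$ by Fenchel--Nielsen deformation along $\alpha$; the resulting inequalities will yield variational constraints on the ``shape'' of $\mu$ near $\alpha$.

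First I would bound the atomic mass $m_\alpha$ of $\mu$ supported on $\alpha$. Let $Y$ be obtained from $X_0$ by shrinking the Fenchel--Nielsen length coordinate of $\alpha$ from $\ell_0 := \ell_{X_0}(\alpha) \geq \epsilon$ to $\ell_0 / 2 \geq \epsilon/2$, keeping the twist $\tau_\alpha$ and all other Fenchel--Nielsen coordinates fixed. Writing
\[
i(\mu, Y) - i(\mu, X_0) = \int \bigl(\ell_Y(\gamma) - \ell_{X_0}(\gamma)\bigr) \, d\mu(\gamma),
\]
the atomic part of $\mu$ on $\alpha$ contributes $-m_\alpha \cdot \ell_0/2$, while for geodesics crossing $\alpha$ transversely a collar-lemma estimate (uniform in $\ell_\alpha \in [\epsilon/2, \ell_0]$) gives $|\ell_Y(\gamma) - \ell_{X_0}(\gamma)| \leq C(\epsilon) \cdot i(\alpha, \gamma)$. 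Integrating and using the minimization inequality $i(\mu, Y) \geq i(\mu, X_0)$ yields
\[
m_\alpha \leq \frac{2 C(\epsilon)}{\epsilon}\, i(\mu, \alpha).
\]

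The atomic bound controls only the atomic contribution $m_\alpha \cdot i(\alpha, \beta)$ to $i(\mu, \beta)$; the remaining, non-atomic portion can still be large if $\mu$ twists substantially around $\alpha$. To handle this, I would construct a second comparison metric $Y'$ obtained from $X_0$ by a Fenchel--Nielsen twist along $\alpha$ of magnitude calibrated to minimize $\ell(\beta)$. By Wolpert's formula for the twist derivative of length, each unit of untwist shortens $\ell(\beta)$ by $\sum_{p \in \alpha \cap \beta} \cos\theta_p$ (evaluated at $X_0$), while changing $\ell(\gamma)$ for other curves $\gamma$ crossing $\alpha$ by at most a bounded multiple of $i(\alpha, \gamma)$ per unit twist. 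Applying $i(\mu, Y') \geq i(\mu, X_0)$ then bounds the ``twist mass'' of $\mu$ around $\alpha$ as read through $\beta$, and combining this with the atomic bound yields $i(\mu, \beta) \leq N(\epsilon)\, i(\mu, \alpha)$.

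The main obstacle is the second step. An untwisting deformation that shortens $\beta$ may simultaneously lengthen other curves in the support of $\mu$ whose twist about $\alpha$ has the opposite sign, so one must carefully identify the optimal twist magnitude and quantify the net effect on $i(\mu, Y')$. Ensuring that the final constant is uniform in $\beta$ --- rather than picking up a factor of $i(\alpha, \beta)$ --- is the most delicate aspect, and requires showing that the per-crossing contribution of $\alpha \cap \beta$ to the twist derivative of $\ell(\beta)$ is bounded above in a way that matches its contribution to the ``cost'' side of the comparison, with the match controlled by $\ell_{X_0}(\alpha) \geq \epsilon$ via compactness of the set of pants geometries with cuff length in $[\epsilon, L]$.
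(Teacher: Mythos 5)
This theorem is cited from \cite{Sapir_comparison} (Theorem 1.2(ii)) and is not proved in the present paper, so there is no in-paper argument to compare against. Evaluating your proposal on its own terms, the central problem is logical, not technical: you are proving the contrapositive of the wrong reading of the statement.

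You interpret the hypothesis as ``there exists a $\beta$ with $i(\alpha,\beta)\geq 1$ and $i(\mu,\beta) > N\, i(\mu,\alpha)$,'' and accordingly you aim to show that $\ell_{X_0}(\alpha)\geq\epsilon$ forces $i(\mu,\beta)\leq N(\epsilon)\,i(\mu,\alpha)$ \emph{uniformly in $\beta$}. But that uniform bound is false, and so is the ``exists $\beta$'' reading of the theorem. For a counterexample, take $\mu = X\in\T(S)$ to be a hyperbolic metric (so $\pi(\mu)=X$ and $i(\mu,\gamma)=\ell_X(\gamma)$), let $\alpha$ be a curve with $\ell_X(\alpha) = 2\epsilon$, and let $\beta_k = T_\alpha^k\gamma$ for a curve $\gamma$ crossing $\alpha$. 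Then $i(\alpha,\beta_k)=i(\alpha,\gamma)\geq 1$ and $\ell_X(\beta_k)\to\infty$, so $i(\mu,\beta_k)/i(\mu,\alpha)\to\infty$, even though $\ell_{\pi(\mu)}(\alpha) = 2\epsilon\geq\epsilon$. The way the theorem is actually applied in the proof of Theorem \ref{thm:Diameter_unbounded_precise} makes the intended reading clear: the hypothesis $i(\mu,\beta) > N\,i(\mu,\alpha)$ is assumed to hold \emph{for every} $\beta$ crossing $\alpha$. The correct contrapositive is therefore the much weaker claim: if $\ell_{\pi(\mu)}(\alpha)\geq\epsilon$, then \emph{some} $\beta$ with $i(\alpha,\beta)\geq 1$ satisfies $i(\mu,\beta)\leq N\,i(\mu,\alpha)$, and the natural candidate is the shortest transversal to $\alpha$ in $X_0 = \pi(\mu)$ (whose $X_0$-length is $O_\epsilon(1)$), not an arbitrary $\beta$.

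Once the goal is corrected, the obstacle you flagged in your second step evaporates, which is itself diagnostic: you noticed that twisting cannot reduce $i(\mu,\cdot)$ against a measure that is twist-symmetric about $\alpha$ (as any hyperbolic metric is), precisely because the uniform-in-$\beta$ bound fails for such $\mu$. For the corrected statement the twist deformation is unnecessary, and the real work is comparing $i(\mu,\alpha)$ and $i(\mu,\beta)$ to $\ell_{X_0}(\alpha)$ and $\ell_{X_0}(\beta)$ when $X_0$ is the length minimizer --- which is the substance of \cite{Sapir_comparison}, not something your length-halving deformation alone yields. Your first step (bounding the atomic mass of $\mu$ on $\alpha$ via the minimizing inequality and a Fenchel--Nielsen length deformation) is a sensible use of the variational characterization, but you should also check that the per-crossing length estimate $\lvert\ell_Y(\gamma)-\ell_{X_0}(\gamma)\rvert\leq C(\epsilon)\,i(\alpha,\gamma)$ holds uniformly when $\ell_{X_0}(\alpha)$ is allowed to be arbitrarily large, since halving a very long cuff is not an $\epsilon$-controlled deformation.
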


Then, this theorem characterizes subsurfaces where no curve is too short:
\begin{theorem}[Theorem 1.3, \cite{Sapir_comparison}]
\label{thm:Thick_part_guarantee}
 Let $Y \subset \pi(\mu)$ be a subsurface so that $\ell_{\pi(\mu)}(\beta) < c_b$ for each boundary component $\beta$ of $Y$, where $c_b$ is the Bers constant. Then for each essential simple closed curve $\alpha$ in $Y$,
\[
 \ell_{\pi(\mu)}(\alpha) \succ 1
\]
if and only if there exists a marking $\Gamma$ of $Y$ so that
 \[
  i(\mu, \gamma) \asymp \sys_Y(\mu)
 \]
for all $\gamma \in \Gamma$, where all constants depend only on $S$.
\end{theorem}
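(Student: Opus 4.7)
The plan is to prove the two implications separately, using Theorem \ref{thm:Short_curves} for the reverse direction and the length-minimizing property of $\pi(\mu)$ for the forward direction.

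For the ($\Leftarrow$) direction, suppose there is a marking $\Gamma$ of $Y$ with $i(\mu, \gamma) \asymp \sys_Y(\mu)$ for every $\gamma \in \Gamma$, and argue by contrapositive. If an essential simple closed curve $\alpha \subset Y$ had $\ell_{\pi(\mu)}(\alpha) < \epsilon$, then because $\Gamma$ is a marking of $Y$ there is some $\gamma \in \Gamma$ with $i(\gamma, \alpha) \geq 1$. Choosing $\epsilon$ small enough that Theorem \ref{thm:Short_curves} applies with constant $N$ exceeding the implicit constant in $i(\mu, \gamma) \asymp \sys_Y(\mu)$ would force $i(\mu, \gamma) > N \cdot i(\mu, \alpha) \geq N \cdot \sys_Y(\mu)$, contradicting the hypothesis. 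Hence $\ell_{\pi(\mu)}(\alpha) \succ 1$ with constant depending only on $S$.

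For the ($\Rightarrow$) direction, assume $\ell_{\pi(\mu)}(\alpha) \succ 1$ for every essential simple closed curve in $Y$. Then $Y$, equipped with the restriction of $\pi(\mu)$, lies in the thick part of the moduli space of $Y$, so Bers' lemma applied inside $Y$ produces a marking $\Gamma$ whose curves have $\pi(\mu)$-length $\asymp 1$ and pairwise intersections bounded in terms of $S$. The lower bound $i(\mu, \gamma) \geq \sys_Y(\mu)$ is immediate. For the upper bound I would exploit that $\pi(\mu)$ minimizes $i(\mu, \cdot)$ over $\T(S)$: if some $\gamma_0 \in \Gamma$ had $i(\mu, \gamma_0) \gg \sys_Y(\mu)$, one could deform $\pi(\mu)$ in the Fenchel--Nielsen coordinates of $Y$ to shrink $\gamma_0$ while growing a systolic curve $\alpha_0$, strictly decreasing $i(\mu, \pi(\mu))$ by the first-variation formula for hyperbolic length and contradicting minimality. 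Taking the deformation small keeps $Y$ thick, so the competing metric remains in $\T(S)$.

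The main obstacle is making this variational step quantitatively precise: one must control how simultaneous perturbations of all Fenchel--Nielsen coordinates of $Y$ interact with the short boundary of $Y$, and verify that the first-order decrease in $i(\mu, \cdot)$ survives second-order corrections and the implicit constants coming from the short-marking estimates. A cleaner alternative would be to bypass derivatives and instead construct an auxiliary $X \in \T(S)$ explicitly by rescaling the hyperbolic structure on $Y$ along a one-parameter family that shortens $\gamma_0$ and lengthens $\alpha_0$, then contradict $i(\mu, \pi(\mu)) \leq i(\mu, X)$ directly to obtain $i(\mu, \gamma_0) \preceq \sys_Y(\mu)$ without explicit use of derivatives.
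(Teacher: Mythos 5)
This statement is not proved in the paper at all: it is quoted verbatim as Theorem~1.3 of \cite{Sapir_comparison} and used as a black box in the proof of Theorem~\ref{thm:Diameter_unbounded_precise}. There is therefore no ``paper's own proof'' to compare you against, and the proof lives in an external reference. That said, your sketch does contain genuine gaps worth flagging.

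In the ($\Leftarrow$) direction you invoke the \emph{converse} of Theorem~\ref{thm:Short_curves}: you pass from ``$\ell_{\pi(\mu)}(\alpha) < \epsilon$'' to ``$i(\mu,\gamma) > N\, i(\mu,\alpha)$.'' But Theorem~\ref{thm:Short_curves}, as quoted in this paper, is one-directional: a large intersection ratio forces a short curve, not the other way around. Even if the full version in \cite{Sapir_comparison} is a biconditional, the converse would only produce \emph{some} curve $\beta$ crossing $\alpha$ with $i(\mu,\beta) \gg i(\mu,\alpha)$; there is no reason this $\beta$ must lie in $Y$ or belong to your chosen marking $\Gamma$, so the inequality $i(\mu,\gamma) > N\,\sys_Y(\mu)$ for $\gamma\in\Gamma$ does not follow and the contradiction with $i(\mu,\gamma)\asymp\sys_Y(\mu)$ is not established. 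You need an additional step controlling how high-$\mu$-intersection transverse curves outside $Y$ propagate to intersection bounds with curves inside $Y$.

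In the ($\Rightarrow$) direction the idea of using the length-minimizing property of $\pi(\mu)$ to bound $i(\mu,\gamma_0)$ from above is natural, but the variational step is not merely a matter of making constants precise: the proposed Fenchel--Nielsen deformation that shrinks $\gamma_0$ will typically lengthen other curves (including possibly the systolic $\alpha_0$ itself, if $\gamma_0$ and $\alpha_0$ are disjoint and both cross the same pants curve), and $\mu$ may have large intersection with those curves too, so there is no a priori guarantee that the first-order change in $i(\mu,\cdot)$ is negative. To make the argument run you would need to decompose $i(\mu,\cdot)$ into contributions from the arcs of $\mu$ inside $Y$ and estimate how each arc's length responds to the deformation, i.e.\ you need quantitative control on $\mu$ restricted to $Y$, not just the single number $i(\mu,\gamma_0)$. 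As written, this is a plausible heuristic rather than a proof.
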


\section{Constructing filling currents}
We will construct a current $\mu \in \C_{fill}(S)$ with special properties. In the next section, we will show that it is far from its length minimizer with respect to the Thurston metric.
\begin{lem}
\label{lem:Define_mu}
 Let $\A = \{\alpha_1, \dots, \alpha_n\}$ be a simple closed multicurve on $S$. For any $N > 0$, there exists a weighted sum of closed curves $\mu \in \C_{fill}(S)$ so that 
 \begin{enumerate}
  \item $i(\mu, \alpha_i) \in \{1,2\}$ for all $\alpha_i \in \A$
  \item If $\beta$ is any closed curve that intersects some $\alpha_i \in \A$, then 
  \[
   i(\mu, \beta) > N i(\mu, \alpha_i)
  \]
  \item There is a marking $\Gamma$ for $S \setminus \bigcup \alpha_i$ so that 
  \[
   i(\mu, \gamma) \asymp 1
  \]
  for each $\gamma \in \Gamma$, for constants depending only on $S$.
  \item For any closed curve $\delta \subset S \setminus \bigcup \alpha_i$ with $i(\delta, \delta) = 1$, we have $i(\mu, \delta) \asymp 1$, where the constant depends on $\delta$, but not on $N$.
 \end{enumerate}
\end{lem}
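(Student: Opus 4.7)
The plan is to build $\mu$ as a weighted sum of three explicit families of curves, arranged so that $\mu$ has vanishing intersection with each $\alpha_i$ but enormous intersection with anything transverse to $\A$, while still filling each complementary subsurface with bounded complexity. Let $\{Y_j\}$ denote the components of $S \setminus \bigcup_i \alpha_i$. For each $\alpha_i$, I would fix a simple closed curve $\bar\alpha_i$ with $i(\bar\alpha_i, \alpha_i) \in \{1,2\}$ and $i(\bar\alpha_i, \alpha_k) = 0$ for $k \neq i$, found inside the union of the one or two components of $S \setminus \bigcup_{k\neq i}\alpha_k$ that meet $\alpha_i$. For each $Y_j$ that is not a pair of pants, I would fix a filling closed curve $\eta_j$ in the interior of $Y_j$. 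Setting $W = 2N+1$, define
\[
 \mu \;=\; W \sum_i \alpha_i \;+\; \sum_i \bar\alpha_i \;+\; \sum_j \eta_j .
\]

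Then I would verify the four conditions directly. For (1), the terms $W\sum_i \alpha_i$ and $\sum_j \eta_j$ are disjoint from every $\alpha_i$, so $i(\mu,\alpha_i) = i(\bar\alpha_i, \alpha_i) \in \{1,2\}$. For (2), if $\beta$ crosses some $\alpha_i$ then $i(\mu,\beta) \geq W \cdot i(\alpha_i,\beta) \geq W = 2N+1 > 2N \geq N\, i(\mu,\alpha_i)$. For (3), take $\Gamma$ to be any marking of $S \setminus \bigcup_i \alpha_i$; each $\gamma \in \Gamma$ lies in the interior of some $Y_j$ of positive complexity, so $i(\mu,\gamma) = \sum_i i(\bar\alpha_i,\gamma) + i(\eta_j,\gamma)$, which is bounded above by a constant depending only on $S$ (all curves involved come from a fixed finite family of bounded complexity) and bounded below by $i(\eta_j,\gamma) \geq 1$ since $\eta_j$ fills $Y_j$. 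Condition (4) is analogous: for $\delta \subset Y_j$ with $i(\delta,\delta)=1$, $i(\alpha_k,\delta)=0$ for all $k$, so $i(\mu,\delta) = \sum_i i(\bar\alpha_i,\delta) + i(\eta_j,\delta)$ is manifestly independent of $N$, positive once we know $\mu$ is filling, and hence $\asymp 1$ with constant depending only on $\delta$.

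It remains to check $\mu \in \C_{fill}(S)$. By \cite[Corollary 3.26]{BIPP} it suffices to show $i(\mu,\alpha) > 0$ for every simple closed curve $\alpha$. If $\alpha$ crosses $\bigcup_i \alpha_i$, property (2) gives it; if $\alpha$ is isotopic into some pair-of-pants component of $S \setminus \A$ then $\alpha$ is parallel to some $\alpha_i$ and (1) gives $i(\mu,\alpha) \geq 1$; otherwise $\alpha$ is essential and non-peripheral in some $Y_j$ of positive complexity and $i(\eta_j, \alpha) \geq 1$.

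I expect the only real obstacle to be the existence of the dual curves $\bar\alpha_i$ disjoint from $\A \setminus \{\alpha_i\}$ in the extreme case where $\A$ is a pants decomposition, so that there is no room in a complementary subsurface to hide a transversal. But this is standard surface topology: the two pairs of pants glued along $\alpha_i$ form either a 4-holed sphere, in which case one of the two other essential simple closed curves of that 4-holed sphere crosses $\alpha_i$ exactly twice and misses the rest of $\A$, or a 1-holed torus (when $\alpha_i$ glues a single pants to itself), in which case a dual generator of $H_1$ crosses $\alpha_i$ once and again misses $\A \setminus \{\alpha_i\}$. When $\A$ has a complementary subsurface of positive complexity adjacent to $\alpha_i$ the transversal can be chosen even more freely inside it.
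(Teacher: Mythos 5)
Your construction is correct, and it takes a genuinely different route from the paper. The paper does \emph{not} put any extra weight on the $\alpha_i$; instead, it extends $\A$ to a marking $\M$, sets $\mu = \sum_{\gamma\in\M}\gamma + \sum_Y \eta_Y$, and achieves the large intersection in condition (2) by building the auxiliary curves $\eta_Y$ to wrap $N+1$ times around each boundary component of $Y$: a $\beta$ that crosses $\alpha_i$ then picks up $\geq N$ intersections with $\eta_Y$ inside the collar. You achieve the same effect by placing a large weight $W = 2N+1$ on the $\alpha_i$ directly and keeping the interior filling curves $\eta_j$ fixed and independent of $N$. This is arguably cleaner: the paper has to argue separately that the wrapping curves $\eta_Y$ have $i(\gamma,\eta_Y)$ and $i(\delta,\eta_Y)$ independent of $N$ for conditions (3) and (4) (using the arc decomposition of $\eta_Y$), whereas for you these bounds are automatic since $\eta_j$ has no $N$-dependence and the $W\alpha_i$ term is invisible to curves confined to a complementary piece. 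Your verification of filling via the case split (crosses $\A$, parallel to some $\alpha_i$, or essential in a positive-complexity $Y_j$) is also sound.

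Two small points to tighten. First, in condition (3) the constants must depend only on $S$, so ``take $\Gamma$ to be any marking'' is too loose; you should take $\Gamma$ to be the restriction to $S\setminus\A$ of a fixed marking $\M \supset \A \cup \{\bar\alpha_i\}$ (as the paper does) and choose the filling curves $\eta_j$ with a uniform bound on $i(\eta_j, \Gamma)$, so that the upper bound $i(\mu,\gamma) \leq \sum_i i(\bar\alpha_i,\gamma) + i(\eta_j,\gamma)$ is genuinely controlled by $\chi(S)$. Second, in the application of this lemma (the proof of Theorem \ref{thm:Diameter_unbounded_precise}) the paper uses the fact that $\mu$ is an integer-weighted sum of closed curves to get $i(\mu,\gamma)\geq 1$ for every simple closed curve $\gamma$; you should therefore take $W = \lceil 2N+1\rceil$ so that $\mu$ retains this property. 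Neither issue is a gap in the idea, just a matter of pinning down choices.
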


\begin{proof}
 First, extend $\A$ to a pants decomposition $\p$, and extend this further to a marking $\M$ of $S$. We use $S \setminus \A$ to denote the result of cutting $S$ along the curves in $\A$.

  \begin{figure}[h!]
  \centering
  \includegraphics{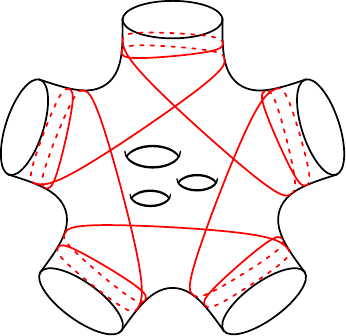}
  \caption{The curve $\eta_Y$ when $N= 1$.}
  \label{fig:Cutoff}
 \end{figure}
 
 Now suppose $Y$ is a connected component of $S \setminus \A$. Up to relabeling, suppose the boundary components of $Y$ are $\alpha_1, \dots, \alpha_m$. Note that a curve in $\A$ can contribute at most 2 boundary components to $Y$ (for example, if $\A$ consists of a single non-separating curve.) Fix a basepoint in $Y$, and choose representatives of $\alpha_1, \dots, \alpha_m$ in $\pi_1(Y)$. Let $\eta_Y$ be the non-simple closed curve represented by 
 \[
  \eta_Y = (\alpha_1)^{N+1}(\alpha_2)^{N+1} \cdots (\alpha_m)^{N+1}
 \]
in $\pi_1(Y)$, where we abuse notation slightly and give the same label to the boundary of $Y$ and its representative in $\pi_1(Y)$ (Figure \ref{fig:Cutoff}). If $Y$ has only 1 boundary curve, $\alpha_1$, then let $\alpha_2$ be any simple closed curve inside $Y$. Let $\eta_Y$ be the curve in the free homotopy class of $\alpha_1^{N+1}\alpha_2$.

 Then we let $\mu$ be the filling current given by
 \[
  \mu = \sum_{\gamma \in \M} \gamma + \sum_{Y \subset S \setminus \A} \eta_Y
 \]
 Since $\mu$ contains a marking of $S$, it is a filling current. Moreover, by definition, $\alpha_1, \dots, \alpha_n$ are all disjoint from $\eta_Y$ for each component $Y$ of $S \setminus \A$. So for each $i$, $\alpha_i$ intersects only its transverse curve in $\M$. Thus, 
 \[
  i(\mu, \alpha_i) \in \{1,2\}
 \]

 Next, suppose $\beta$ is any closed curve that intersects $\alpha_i$. Then $\beta$ crosses $\alpha_i$ and enters some component $Y$ of $S \setminus \A$. Thus, $\beta$ must intersect $\eta_Y$. Moreover, as $\eta_Y$ twists $N+1$ times about $\alpha_i$, we must have that 
 \[
  i(\beta, \eta_Y) \geq  N
 \]
 where we are only guaranteed $N$ intersections, not $N+1$ intersection, because the number of self-intersections of the curve $\eta_Y$  in the cylinder about $\alpha_i$ depends on the orientation of the boundary curves of $Y$ relative to one another.
In particular,
\[
 i(\beta, \mu) \geq N i(\alpha_i, \mu)
\]

Now let $\Gamma$ be the marking $\M$ restricted to $S \setminus \A$. That is, we take all the curves in $\M$ that are essential and non-peripheral in $S \setminus \A$. Let $\gamma \in \Gamma$. Then $\gamma$ lies in some component $Y$ of $S \setminus \A$. So $\gamma$ can only intersect $\eta_Y$, or other curves in $\M$. We have that $i(\M, \M) \asymp 1$ by the definition of a marking, where the constant depends linearly on $\chi(S)$. So we just need to control $i(\gamma, \eta_Y)$. 

Suppose the boundary of $Y$ consists of curves $\alpha_1, \dots, \alpha_m$. Choose a point $x_i$ on $\alpha_i$ for each $i$. Then for each $i= 1, \dots, m$ we can find an arc $\delta_i$ joining $x_i$ to $x_{i+1}$ (were we set $x_{m+1} = x_1$) so that $\eta_Y$ is freely homotopic to a curve that winds $N+1$ times around $\alpha_1$ starting and ending at $x_1$, follows $\delta_1$, winds $N+1$ times around $\alpha_2$ and so on, until it follows $\delta_m$ back from $x_m$ on $\alpha_m$ to $x_1$ on $\alpha_1$. If $Y$ has one boundary curve, then take $\delta_1$ to be an arc from $x_1$ to itself, so that, when we close it up at $x_1$, it is freely homotopic to the simple closed curve $\alpha_2$ in $Y$. 

As $\gamma$ lies inside $Y$, it only intersects $\delta_1, \dots, \delta_m$, and not any of the curves on $\partial Y$. Moreover, we can choose the arcs $\delta_1, \dots, \delta_m$ independent of the number of times $\eta_Y$ winds around the boundary curves. These arcs just depend on the choice of representative of $\alpha_1, \dots, \alpha_m$ in $\pi_1(S)$. Thus, 
\[
 i(\gamma, \eta_Y) \leq \sum_{i=1}^m i(\gamma, \delta_i)
\]
where $i(\gamma, \delta_i)$ is the least number of intersection points between a curve isotopic to $\gamma$ inside $Y$, and a curve isotopic to $\delta_i$ relative its endpoints. As the right-hand side depends only on the topology of $Y$, we have $i(\gamma, \eta_Y) \prec 1$. In other words, 
\[
 i(\gamma, \mu) \prec 1
\]
for constants only depending on $D$.

Lastly, let $\delta$ be any curve with one self-intersection in some complementary region $Y$ of $S \setminus \A$. Then $\delta$ intersects the marking $\M$, and the curve $\eta_Y$. The intersection of $\delta$ with $\M$ depends on $\delta$, but is independent of $N$. This is also true for the intersection of $\delta$ with $\eta_Y$. To see this, consider the curve above homotopic to $\eta_Y$, that is a concatenation of the arcs $\delta_1, \dots, \delta_m$ and the boundary curves of $Y$. Since $\delta$ is disjoint from the boundary curves of $Y$, it only intersects the arcs $\delta_1, \dots, \delta_m$. So $i(\delta, \eta_Y)$ depends on $\delta$, but is independent on $N$. In other words ,$i(\delta, \mu) = O(1)$.
\end{proof}

\section{Far-away currents}
\label{sec:Unbounded_diameter}
For any point $X \in \T(S)$, its pre-image under the length-minimizing projection is compact, and therefore has bounded diameter. However, in any thin part of $\T(S)$, we can find metrics $X$ where $p^{-1}(X)$ has arbitrarily large diameter. More than that, we construct currents $\mu$ lying over any thin part, which are arbitrarily far away from \textit{any} point in $\T(S)$.


To make this precise, let $\epsilon, c > 0$. Let $\A$ be a simple closed multi-curve on $S$. Then define the $\A, \epsilon$-thin part of Teichm\"uller space, $\T_{\A, \epsilon}(S)$, to be the set  of those points $X \in \T(S)$ where $\ell_X(\alpha) < \epsilon$ for all $\alpha \in \A$, and $\ell_X(\beta) > c$ for all $\beta \not \in \A$. So this is a subset of the $\epsilon$-thin part of $\T(S)$ where the short curves are exactly those in $\A$. We show that we can find currents $\mu$ projecting to any $\A, \epsilon$-thin part, which are arbitrarily far away from all of $\T(S)$.

\begin{theorem}
\label{thm:Diameter_unbounded_precise}
Let $\A$ be any simple closed multicurve. For all $\epsilon > 0$ small enough, and for all $D > 0$, there exists a $\mu \in \PC_{fill}(S)$ so that 
\[
 \pi(\mu) \in \T_{\A,\epsilon}(S)
\]
and for all $X \in \T(S)$,
\[
 d(\mu, X) > D
\]
In other words, $\mu$ lies above the $\A, \epsilon$-thin part, and is distance at least $D$ from any point in Teichm\"uller space.
%
\end{theorem}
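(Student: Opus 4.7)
The plan is to apply Lemma \ref{lem:Define_mu} to build $\mu=\mu_N\in\C_{fill}(S)$ with $N$ chosen large enough as a function of $D$ (and $\epsilon$), and then verify the two required properties in turn.

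To show $\pi(\mu)\in\T_{\A,\epsilon}(S)$: property (2) of Lemma \ref{lem:Define_mu}, applied to each $\alpha_i\in\A$ by taking any simple closed curve crossing $\alpha_i$ as the companion $\beta$, is exactly the hypothesis of Theorem \ref{thm:Short_curves}, so for $N$ above the threshold $N(\epsilon)$ we obtain $\ell_{\pi(\mu)}(\alpha_i)<\epsilon$. To apply Theorem \ref{thm:Thick_part_guarantee} on each complementary component $Y$ of $S\setminus\bigcup\alpha_i$, I first verify $\sys_Y(\mu)\asymp 1$: an upper bound $O(1)$ comes from property (3) applied to any $\gamma\in\Gamma\cap Y$, while $\sys_Y(\mu)\geq 1$ comes from $i(\mu,\alpha)\geq i(\M,\alpha)\geq 1$ for any essential simple closed curve $\alpha$ in $Y$ (using that $\mu$ contains the marking $\M$ and that $\M$ fills $S$). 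Property (3) then furnishes the marking required by Theorem \ref{thm:Thick_part_guarantee} with $i(\mu,\gamma)\asymp 1\asymp\sys_Y(\mu)$, yielding $\ell_{\pi(\mu)}(\alpha)\succ 1$ for every essential simple closed curve $\alpha\subset Y$; combined with the short-curve conclusion, $\pi(\mu)\in\T_{\A,\epsilon}(S)$.

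For the distance bound, set $f(\lambda)=i(\mu,\lambda)/i(X,\lambda)$; since $d_{Th}(\mu,X)=\log(\sup_\lambda f(\lambda)/\inf_\lambda f(\lambda))$, it suffices to find curves $\lambda_1,\lambda_2$ with $f(\lambda_1)/f(\lambda_2)>e^D$. Fix $X\in\T(S)$, pick a Bers pants decomposition $Q$ of $X$ (all $\ell_X\leq d_0$), and case-split. If some $\beta\in Q$ crosses some $\alpha_i$, property (2) yields $f(\beta)>N/d_0$; for the companion, either some $\gamma\in Q$ is disjoint from $\A$ and $f(\gamma)=O(1)/\ell_X(\gamma)$ by properties (3), (4), giving $f(\beta)/f(\gamma)\gtrsim N$ when $\ell_X(\gamma)$ is not exponentially small---or $\gamma$ itself is very short, in which case a transversal to $\gamma$ inside its component together with the collar estimate yields $d_{Th}(\mu,X)\gtrsim\log(\log(1/\ell_X(\gamma))/\ell_X(\gamma))>D$. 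In the remaining case $\A\subset Q$, so $L_i:=\ell_X(\alpha_i)\leq d_0$ for all $i$: if some $L_i<e^{-D}$ take $\lambda_1=\alpha_i$ against $\lambda_2$ a figure-eight in a neighboring pair of pants (property (4) bounds $i(\mu,\lambda_2)$ independently of $N$, and the pants geometry gives $\ell_X(\lambda_2)=O(\log(1/L_i))$, so $f(\alpha_i)/f(\lambda_2)\gtrsim\log(1/L_i)/L_i>e^D$); otherwise every $L_i\geq e^{-D}$ and we take $\lambda_1=\bar\alpha_i^{\mathrm{Bers}}$, a Bers transversal with $\ell_X(\lambda_1)=O(\log(1/L_i)+1)=O(D)$ and $i(\mu,\lambda_1)>N$, against $\lambda_2=\alpha_i$, to obtain $f(\lambda_1)/f(\lambda_2)\gtrsim NL_i/D\geq Ne^{-D}/D$, which exceeds $e^D$ for $N>De^{2D}$.

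The main technical obstacle is to ensure the case analysis covers all $X$ uniformly, and in particular to handle the intermediate regime $L_i\asymp\log(N)/N$, where the collar inequality on the Bers transversal is essentially balanced against the twisting forced by $\mu$; here one must switch between the candidate large-$f$ curves ($\alpha_i$ itself, its Bers transversal, or a $\gamma\in Q$ disjoint from $\A$) depending on the precise regime of $L_i$. Property (4) of Lemma \ref{lem:Define_mu}, providing an $N$-independent bound on $i(\mu,\delta)$ for one-self-intersection curves $\delta$, is essential for supplying bounded-$f$ witnesses via figure-eights. Choosing $N$ large enough for all these estimates and for Theorem \ref{thm:Short_curves} then completes the argument.
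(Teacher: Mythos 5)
The first half of your argument (showing $\pi(\mu) \in \T_{\A,\epsilon}(S)$) matches the paper's proof. The distance bound, however, has a uniformity gap. In sub-cases A1, A2, and B1 you take as the small-$f$ witness $\lambda_2$ a curve that depends on $X$ --- a curve $\gamma \in Q$ disjoint from $\A$, a transversal to it, or a figure-eight in a pants of $Q$ --- and invoke properties (3) and (4) of Lemma \ref{lem:Define_mu} to bound $i(\mu, \lambda_2)$. But property (4) only gives $i(\mu, \delta) \asymp 1$ with a \emph{constant that depends on $\delta$} (independent of $N$, but not of the topological type), and property (3) only controls curves in the one fixed marking $\Gamma$. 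As $X$ ranges over $\T(S)$, the Bers pants decomposition $Q$ (hence the curve $\lambda_2$ you read off it) can be made topologically arbitrarily complicated, so $i(\mu, \lambda_2)$ is not bounded uniformly in $X$ and the estimate $f(\lambda_2) \prec 1/\ell_X(\lambda_2)$ does not hold with a constant depending only on $S$ and $\A$.

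The paper avoids this by fixing a single one-self-intersection curve $\delta \subset S \setminus \A$ once and for all, before quantifying over $X$. Then $i(\mu, \delta) \prec 1$ with the now-fixed $\delta$-dependent constant, and $\ell_X(\delta) \geq 1$ for every $X$ by Buser, so $\sup_\nu \ell_X(\nu)/i(\mu,\nu) \succ 1$ uniformly in $X$; a systole of $X$ similarly gives $\sup_\nu i(\mu,\nu)/\ell_X(\nu) \succ 1$ because $\mu$ has integer intersection with every simple closed curve. After that, it suffices to exhibit \emph{one} $\nu$ (possibly depending on $X$) with $i(\mu,\nu)/\ell_X(\nu) \geq e^D$ or $\ell_X(\nu)/i(\mu,\nu) \geq e^D$, and such a witness only requires the $X$-uniform \emph{lower} bounds $i(\mu, \alpha) \geq 1$ and $i(\mu, \beta) > N$, which are safe. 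The paper does this by fixing $\alpha \in \A$, taking $\beta$ to be the shortest curve of $X$ crossing $\alpha$, and splitting on $\ell_X(\beta)$ versus $\sqrt N$, and then on $\ell_X(\alpha)$ versus $\Col_X(\alpha)$. Your sub-case B2 (Bers transversal against $\alpha_i$) is a correct fragment of this, but sub-cases A1, A2, and B1 need to be replaced by the fixed-$\delta$ baseline argument rather than by $X$-dependent companions.
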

The proof will apply the lemmas in Section \ref{sec:When_curves_short} to the current from Lemma \ref{lem:Define_mu}. 
\begin{proof}
 Let $\epsilon, D > 0$. Let $N$ be the larger of the constant from Theorem \ref{thm:Short_curves} for the given $\epsilon$, and $e^{2D}$. Consider the current $\mu$ from Lemma \ref{lem:Define_mu} above, defined for the multicurve $\A$ and constant $N$. Let $\alpha \in \A$. By property (2) of Lemma  \ref{lem:Define_mu}, $i(\mu, \beta) > N i(\mu, \alpha)$ for any closed curve $\beta$ that intersects $\alpha$. So by Theorem  \ref{thm:Short_curves}, 
 \[
  \ell_{\pi(\mu)}(\alpha) < \epsilon
 \]
 for all $\alpha \in \A$.
 
 
 Now let $Y$ be a connected component of $S \setminus \A$. Each boundary curve of $Y$ has length at most $\epsilon$ in $\pi(\mu)$. So for all $\epsilon$ small enough, Theorem \ref{thm:Thick_part_guarantee} applies. Take the marking $\Gamma$ guaranteed by property (3) of Lemma  \ref{lem:Define_mu}. Then for any $\gamma \in \Gamma$, we have $i(\mu, \Gamma) \asymp 1$. But the current $\mu$ from Lemma  \ref{lem:Define_mu} is a sum of closed curves, so it has integer intersection with any simple closed curve. Thus, $\sys_Y(\mu) \geq 1$ and so, 
\[
 i(\mu, \gamma) \asymp \sys_Y(\mu)
\]
for any $\gamma \in \Gamma$. So by Theorem \ref{thm:Thick_part_guarantee}, for any essential non-peripheral simple closed curve $\beta$ on $Y$,
\[
 \ell_{\pi(\mu)}(\beta) \succ 1
\]
where the constant depends only on $S$. In other words, 
\[
\pi(\mu) \in \T_{\A, \epsilon}(S)
\]
for an implied constant $c$ depending only on $\chi(S)$.

Now we show that $\mu$ is at least distance $D$ away from any point $X \in \T(S)$. Fix a curve $\delta$ with one self-intersection, that lies in some connected component $Y$ of $S \setminus \A$. Next, choose $X$. By Buser \cite[Theorem 4.2.1]{Buser}, since $i(\delta, \delta) = 1$, we have $\ell_X(\gamma) \geq 1$. By property (4) of Lemma  \ref{lem:Define_mu}, $i(\mu, \delta) \asymp 1$. Thus,
\[
 \frac{\ell_X(\delta)}{i(\mu, \delta)} \succ 1
\]
Moreover, let $\gamma$ be a curve on $X$ shorter than the Bers constant $c_b$. As $\mu$ is a sum of closed curves, $i(\mu, \gamma) \geq 1$. In other words, 
\[
 \frac{i(\mu, \gamma)}{\ell_X(\gamma)} \succ 1
\]
where the constant depends only on $S$.
Therefore, 
\[
 \sup_\nu \frac{\ell_X(\nu)}{i(\mu, \nu)},\sup_\nu \frac{i(\mu, \nu)}{\ell_X(\nu)} \succ 1
\]
So to show that the distance from $\mu$ to $X$ is at least $D$, we just have to find one ratio of the form  $\frac{\ell_X(\nu)}{i(\mu, \nu)}$ or $\frac{i(\mu, \nu)}{\ell_X(\nu)}$ bounded below by $e^D$.

Take any $\alpha \in \A$, and let $\beta$ be the shortest curve that intersects $\alpha$. Then, from Lemma \ref{lem:Define_mu}, we have 
\[
 i(\mu, \alpha)\in \{1,2\}, \text{ and } i(\mu, \beta) > N 
\]
If $\ell_X(\beta) \leq \sqrt N$, then 
\[
 \frac{i(\mu, \beta)}{\ell_X(\beta)} \geq \sqrt N = e^D
\]
by definition of $N$.

So suppose $\ell_X(\beta) > \sqrt N$. As $\beta$ is the shortest curve that intersects $\alpha$, we have that 
\[
 \ell_X(\beta) \prec \Col_X(\alpha) + \ell(\alpha)
\]
for constants depending only on $\chi(S)$. This follows, for example, from \cite[Lemma 8.1]{Sapir_comparison}. Then either $\ell_X(\alpha) \geq \frac 12 \sqrt N$, or $\Col_X(\alpha) \geq \frac 12 \sqrt N$. In the first case, when $\ell_X(\alpha) \geq \frac 12 \sqrt N$, we have 
\[
 \frac{\ell_X(\alpha)}{i(\mu, \alpha)} \geq \frac 12 \sqrt N \geq \frac 12 e^D
\]
by definition of $N$. So suppose $\Col_X(\alpha) \geq \frac 12 \sqrt N$. When the collar width of a curve is large enough, we have the estimate $\Col_X(\alpha)\asymp - \log \ell_X(\alpha)$. So for $D$, and therefore $N$, large enough, $\ell_X(\alpha) \prec e^{-\sqrt N/2}$. In other words, 
\[
 \frac{i(\mu, \alpha)}{\ell_X(\alpha)} \succ e^{N/2} = e^{\frac 12 e^{D}}
\]

In each of these cases, we get $d(\mu, X) \succ D$, for a constant depending only on $S$. Clearly, we can multiply $N$ by a constant depending on $S$ if necessary to get $d(\mu, X) \geq D$ for all $X \in \T(S)$.

%
%
%
\end{proof}

 \bibliographystyle{alpha}
  \bibliography{ProjectionProperties2}
\end{document}